%\documentclass[a4paper,12pt]{article}
%\usepackage[top=2.5cm,bottom=2.5cm,left=2.5cm,right=2.5cm]{geometry}
%\usepackage[margin=1cm,%
%font=small,%
%format=hang,%
%labelsep=period,%
%labelfont=bf]{caption}
%\pagestyle{empty}
 \documentclass[a4paper,12pt]{article}
    \usepackage[top=2.5cm,bottom=2.5cm,left=2.5cm,right=2.5cm]{geometry}
    \usepackage{cite, amsmath, amssymb}
    \usepackage[margin=1cm,%
                font=small,%
                format=hang,%
                labelsep=period,%
                labelfont=bf]{caption}
    \pagestyle{empty}
\usepackage[latin1]{inputenc}
\usepackage{amsmath}
\usepackage{amsfonts}
\usepackage{amssymb}
\usepackage{cite}
\usepackage{amsthm}
\usepackage{makeidx}
\usepackage{graphicx}
\usepackage{mathrsfs,xcolor}
\usepackage{enumerate}
\usepackage{blkarray}
\usepackage{bm}
\usepackage{setspace}
\usepackage{kbordermatrix}
\usepackage{float}
\usepackage{listings}
\lstset{
   breaklines=true,
   basicstyle=\ttfamily}

%\usepackage{tikz}
%\usetikzlibrary{arrows,shapes,quotes}
%\usetikzlibrary{patterns,decorations.pathreplacing}

\usepackage{authblk}

\numberwithin{table}{section}
\numberwithin{equation}{section}

\theoremstyle{plain}
\newtheorem{theorem}{Theorem}[section]
\newtheorem{proposition}[theorem]{Proposition}
\newtheorem{definition}[theorem]{Definition}
\newtheorem{lemma}[theorem]{Lemma}
\newtheorem{example}[theorem]{Example}

\newtheorem{corollary}[theorem]{Corollary}
\newtheorem{remark}[theorem]{Remark}

\usepackage{authblk}
\author[1,2,*]{ \textbf{Bryan S. Hernandez}}
%\author[1,3,4,5]{\textbf{Eduardo R. Mendoza}}
%\author[2]{\textbf{Aurelio A. de los Reyes V}} 

\affil[1]{\small \textit{Institute of Mathematics, University of the Philippines Diliman, Quezon City 1101, Philippines}}
\affil[2]{\small \textit{Institute of Mathematical Sciences and Physics, University of the Philippines,  Los Ba\~{n}os, Laguna 4031, Philippines
}}
%\affil[3]{\small \textit{Department of Mathematics and Physics, University of Santo Tomas, Manila, Philippines}}
%\affil[4]{\small \textit{Max Planck Institute of Biochemistry, Martinsried, Munich, Germany}}
%\affil[5]{\small \textit{LMU Faculty of Physics, Geschwister -Scholl- Platz 1, 80539 Munich, Germany}}
\affil[*]{Corresponding author: \texttt{bryan.hernandez@upd.edu.ph}}

\title{\vspace{3.5cm}\textbf{On the Independence of Fundamental Decompositions of Power-Law Kinetic Systems}}
%\title{\vspace{3.5cm}\textbf{Analysis of Multistationarity of Chemical Reaction Networks with Independent Fundamental Decompositions}}

\date{\normalsize January 2020}

\begin{document}
\maketitle
\thispagestyle{empty}
\begin{abstract}
%%%%%%%OK%%%%%
The fundamental decomposition of a chemical reaction network (CRN) is induced by partitioning the reaction set into ``fundamental classes''. It was the basis of the Higher Deficiency Algorithm for mass action systems of Ji and Feinberg, and the Multistationarity Algorithm for power-law kinetic systems of Hernandez et al.
In addition to our previous work, we provide important properties of the independence (i.e., the network's stoichiometric subspace is the direct sum of the subnetworks' stoichiometric subspaces) and the incidence-independence (i.e., the image of the network's incidence map is the direct sum of the incidence maps' images of the subnetworks) of these decompositions.
Feinberg established the essential relationship between independent decompositions and the set of positive equilibria of a network, which we call the Feinberg Decomposition Theorem (FDT). Moreover, Fari{\~n}as et al. recently documented its version for incidence-independence.
Fundamental decomposition divides the network into subnetworks of deficiency either 0 or 1 only. Hence, available results for lower deficiency networks, such as the Deficiency Zero Theorem (DZT), can be used. These justify the study of independent fundamental decompositions. A MATLAB program which (i) computes the subnetworks of a CRN under the fundamental decomposition and (ii) is useful for determining whether the decomposition is independent and incidence-independent is also created. Finally, we provide the following solution for determining multistationarity of CRNs with the following steps: (1) the use of the program, (2) the application of available results for CRNs with deficiency 0 or 1 (e.g., DZT), and (3) the use of FDT.
We illustrate the solution by showing that the generalization of a subnetwork of Schmitz's carbon cycle model by Hernandez et al., endowed with mass action kinetics, has no capacity for multistationarity.
%%%%%%%OK%%%%%
\end{abstract}
\baselineskip=0.30in

\section{Introduction} 
\label{intro}
The fundamental decomposition (or ``$\mathscr{F}$-decomposition'') of a chemical reaction network (CRN) is the set of subnetworks generated by the partition of its set of reactions into ``fundamental classes''. This was introduced by Ji and Feinberg \cite{ji} in 2011 as the basis of their Higher Deficiency Algorithm (HDA) for mass action systems. In 2020, Hernandez et al. \cite{hernandez} extended the HDA to power-law kinetic systems with reactant-determined interactions (PL-RDK). These are reactions branching from the same reactant complex having identical kinetic order vectors. By combining this extension with a method that transforms a power-law kinetic system with non-reactant-determined interactions (PL-NDK) to a dynamically equivalent PL-RDK system, the Multistationarity Algorithm (MSA) was established and used to determine multistationarity (i.e., the system admits at least two equilibria) of any power-law kinetic system within a stoichiometric class.

This paper explores properties of the $\mathscr{F}$-decomposition, in particular, its
independence (i.e., the network's stoichiometric subspace is the direct sum of the subnetworks' stoichiometric subspaces) and its incidence-independence (i.e., the image of the network's incidence map is the direct sum of the incidence maps' images of the subnetworks). In our previous work \cite{hernandez2}, we mentioned that for independent fundamental decompositions, the transformation that converts a PL-NDK system to a PL-RDK system is not necessary. This was actually one of the main motivations of the study of independent fundamental decompositions.

M. Feinberg established the essential relationship between independent decompositions and the set of positive equilibria of a network in 1987, which we call the Feinberg Decomposition Theorem (FDT) \cite{feinberg12}. A corresponding relationship between incidence-independent, weakly reversible decompositions and complex-balanced equilibria of a weakly reversible network was recently documented by Fari{\~n}as et al. \cite{FML2018}. These also justify the relevance of independent decompositions. Moreover, fundamental decompositions decompose CRNs into subnetworks where each of the subnetworks has deficiency either 0 or 1. Thus, the well-known Deficiency Zero and Deficiency One Theorems, and other lower deficiency theorems in literature, maybe used. Hence, by determining whether the fundamental decomposition of a CRN is independent, one maybe able to decide whether it has the capacity for multistationarity using the lower deficiency theorems and the FDT.

The paper is organized as follows: Section \ref{prelim} provides the fundamentals of chemical reaction networks and chemical kinetic systems. It also presents relevant results from the decomposition theory of CRNs. Section \ref{orientations:decompositions} collects important properties of the fundamental and other related decompositions.
In Section \ref{funde_decomposition}, we provide results between the independence and the incidence-independence of the fundamental decomposition and the types of the networks.
We also establish the following steps in determining whether a CRN has the capacity for multistationarity:
\begin{itemize}
\item[1.] Is the fundamental decomposition of the network independent? If yes, we go to the next step.
\item[2.] Under the fundamental decomposition, each subnetwork has a deficiency either 0 or 1. Use the existing results for such CRNs.
\item[3.] Use the FDT. 
\end{itemize}
In addition, the solution of the multistationarity of the CRN of the
generalization of a subnetwork of Schmitz's carbon cycle model by Hernandez et al. endowed with mass action kinetics is also given. This serves as the major example for this section.
In Section \ref{decomposition_CFRM}, we establish relationships between the fundamental decomposition of a kinetic system and its transform under the CF-RM.  Conclusions and an outlook constitute Section \ref{sec:conclusion}. Tables of acronyms and frequently used symbols are provided in Appendix \ref{nomenclature:appendix}. Finally, Appendix \ref{matlabprogram} provides a program that outputs the subnetworks under the fundamental decomposition, and determines whether the decomposition is independent or incidence-independent.

\section{Fundamentals of Chemical Reaction Networks and Kinetic Systems}
\label{prelim}
\indent In this section, we recall some fundamental notions about chemical reaction networks and chemical kinetic systems \cite{arc_jose,feinberg}. We also present important preliminaries on the decomposition theory which was introduced by Feinberg in \cite{feinberg12}.

\subsection{Fundamentals of Chemical Reaction Networks}

\begin{definition}
A {\bf chemical reaction network} (CRN) $\mathscr{N}$ is a triple $\left(\mathscr{S},\mathscr{C},\mathscr{R}\right)$ of nonempty finite sets where $\mathscr{S}$, $\mathscr{C}$, and $\mathscr{R}$ are the sets of $m$ species, $n$ complexes, and $r$ reactions, respectively, such that
$\left( {{C_i},{C_i}} \right) \notin \mathscr{R}$ for each $C_i \in \mathscr{C}$; and
for each $C_i \in \mathscr{C}$, there exists $C_j \in \mathscr{C}$ such that $\left( {{C_i},{C_j}} \right) \in \mathscr{R}$ or $\left( {{C_j},{C_i}} \right) \in \mathscr{R}$.
\end{definition}

\begin{definition}
The {\bf molecularity matrix}, denoted by $Y$, is an $m\times n$ matrix such that $Y_{ij}$ is the stoichiometric coefficient of species $X_i$ in complex $C_j$.
The {\bf incidence matrix}, denoted by $I_a$, is an $n\times r$ matrix such that 
$${\left( {{I_a}} \right)_{ij}} = \left\{ \begin{array}{rl}
 - 1&{\rm{ if \ }}{C_i}{\rm{ \ is \ in \ the\ reactant \ complex \ of \ reaction \ }}{R_j},\\
 1&{\rm{  if \ }}{C_i}{\rm{ \ is \ in \ the\ product \ complex \ of \ reaction \ }}{R_j},\\
0&{\rm{    otherwise}}.
\end{array} \right.$$
The {\bf stoichiometric matrix}, denoted by $N$, is the $m\times r$ matrix given by 
$N=YI_a$.
\end{definition}
Let $\mathscr{I}=\mathscr{S}, \mathscr{C}$ or $\mathscr{R}$. We denote the standard basis for $\mathbb{R}^\mathscr{I}$ by $\left\lbrace \omega_i \in \mathbb{R}^\mathscr{I} \mid i \in \mathscr{I} \right\rbrace$.

\begin{definition}
Let $\mathscr{N}=(\mathscr{S,C,R})$ be a CRN. The {\bf{incidence map}} $I_a : \mathbb{R}^\mathscr{R} \rightarrow \mathbb{R}^\mathscr{C}$ is the linear map such that for each reaction $r:C_i \rightarrow C_j \in \mathscr{R}$, the basis vector $\omega_r$ to the vector $\omega_{C_j}-\omega_{C_i} \in \mathscr{C}$.
\end{definition} 

\begin{definition}
The {\bf reaction vectors} for a given reaction network $\left(\mathscr{S},\mathscr{C},\mathscr{R}\right)$ are the elements of the set $\left\{{C_j} - {C_i} \in \mathbb{R}^\mathscr{S}|\left( {{C_i},{C_j}} \right) \in \mathscr{R}\right\}.$
\end{definition}

\begin{definition}
The {\bf stoichiometric subspace} of a reaction network $\left(\mathscr{S},\mathscr{C},\mathscr{R}\right)$, denoted by $S$, is the linear subspace of $\mathbb{R}^\mathscr{S}$ given by $S = span\left\{ {{C_j} - {C_i} \in \mathbb{R}^\mathscr{S}|\left( {{C_i},{C_j}} \right) \in \mathscr{R}} \right\}.$ The {\bf rank} of the network, denoted by $s$, is given by $s=\dim S$. The set $\left( {x + S} \right) \cap \mathbb{R}_{ \ge 0}^\mathscr{S}$ is said to be a {\bf stoichiometric compatibility class} of $x \in \mathbb{R}_{ \ge 0}^\mathscr{S}$.
\end{definition}

\begin{definition}
Two vectors $x, x^{*} \in {\mathbb{R}^\mathscr{S}}$ are {\bf stoichiometrically compatible} if $x-x^{*}$ is an element of the stoichiometric subspace $S$.
\end{definition}

We can view complexes as vertices and reactions as edges. With this, CRNs can be seen as graphs. At this point, if we are talking about geometric properties, {\bf vertices} are complexes and {\bf edges} are reactions. If there is a path between two vertices $C_i$ and $C_j$, then they are said to be {\bf connected}. If there is a directed path from vertex $C_i$ to vertex $C_j$ and vice versa, then they are said to be {\bf strongly connected}. If any two vertices of a subgraph are {\bf (strongly) connected}, then the subgraph is said to be a {\bf (strongly) connected component}. The (strong) connected components are precisely the {\bf (strong) linkage classes} of a CRN. The maximal strongly connected subgraphs where there are no edges from a complex in the subgraph to a complex outside the subgraph is said to be the {\bf terminal strong linkage classes}.
We denote the number of linkage classes and the number of strong linkage classes by $l$ and $sl$, respectively.
A CRN is said to be {\bf weakly reversible} if $sl=l$.

\begin{definition}
For a CRN, the {\bf deficiency} is given by $\delta=n-l-s$ where $n$ is the number of complexes, $l$ is the number of linkage classes, and $s$ is the dimension of the stoichiometric subspace $S$.
\end{definition}

\subsection{Fundamentals of Chemical Kinetic Systems}

\begin{definition}
A {\bf kinetics} $K$ for a reaction network $\left(\mathscr{S},\mathscr{C},\mathscr{R}\right)$ is an assignment to each reaction $r: y \to y' \in \mathscr{R}$ of a rate function ${K_r}:{\Omega _K} \to {\mathbb{R}_{ \ge 0}}$ such that $\mathbb{R}_{ > 0}^\mathscr{S} \subseteq {\Omega _K} \subseteq \mathbb{R}_{ \ge 0}^\mathscr{S}$, $c \wedge d \in {\Omega _K}$ if $c,d \in {\Omega _K}$, and ${K_r}\left( c \right) \ge 0$ for each $c \in {\Omega _K}$.
Furthermore, it satisfies the positivity property: supp $y$ $\subset$ supp $c$ if and only if $K_r(c)>0$.
The system $\left(\mathscr{S},\mathscr{C},\mathscr{R},K\right)$ is called a {\bf chemical kinetic system}.
\end{definition}

\begin{definition}
The {\bf species formation rate function} (SFRF) of a chemical kinetic system is given by $f\left( x \right) = NK(x)= \displaystyle \sum\limits_{{C_i} \to {C_j} \in \mathscr{R}} {{K_{{C_i} \to {C_j}}}\left( x \right)\left( {{C_j} - {C_i}} \right)}.$
\end{definition}
The ordinary differential equation (ODE) or dynamical system of a chemical kinetics system is $\dfrac{{dx}}{{dt}} = f\left( x \right)$. An {\bf equilibrium} or {\bf steady state} is a zero of $f$.

\begin{definition}
The {\bf set of positive equilibria} of a chemical kinetic system $\left(\mathscr{S},\mathscr{C},\mathscr{R},K\right)$ is given by ${E_ + }\left(\mathscr{S},\mathscr{C},\mathscr{R},K\right)= \left\{ {x \in \mathbb{R}^\mathscr{S}_{>0}|f\left( x \right) = 0} \right\}.$
\end{definition}

A CRN is said to admit {\bf multiple equilibria} if there exist positive rate constants such that the ODE system admits more than one stoichiometrically compatible equilibria.

\begin{definition}
A kinetics $K$ is {\bf complex factorizable} if, for $K(x) = k I_K(x)$,  the interaction map $I_K : \mathbb{R}^\mathscr{S} \to \mathbb{R}^\mathscr{R}$
factorizes via the space of complexes $\mathbb{R}^\mathscr{C} : I_K = I_k \circ \psi _K$ with  $\psi _K: \mathbb{R}^\mathscr{S} \to \mathbb{R}^\mathscr{C}$ as factor map and
$I_k = diag(k) \circ \rho'$ with $\rho' : \mathbb{R}^\mathscr{C} \to \mathbb{R}^\mathscr{R}$ assigning the value at a reactant complex to all its reactions. 

\end{definition}

\begin{definition}
A kinetics $K$ is a {\bf power-law kinetics} (PLK) if 
${K_i}\left( x \right) = {k_i}{{x^{{F_{i}}}}} $ for $i =1,...,r$ where ${k_i} \in {\mathbb{R}_{ > 0}}$ and ${F_{ij}} \in {\mathbb{R}}$. The power-law kinetics is defined by an $r \times m$ matrix $F$, called the {\bf kinetic order matrix} and a vector $k \in \mathbb{R}^\mathscr{R}$, called the {\bf rate vector}.
\end{definition}
If the kinetic order matrix is the transpose of the molecularity matrix, then the system becomes the well-known {\bf mass action kinetics (MAK)}.

\begin{definition}
A PLK system has {\bf reactant-determined kinetics} (of type PL-RDK) if for any two reactions $i, j$ with identical reactant complexes, the corresponding rows of kinetic orders in $F$ are identical, i.e., ${f_{ik}} = {f_{jk}}$ for $k = 1,2,...,m$. A PLK system has {\bf non-reactant-determined kinetics} (of type PL-NDK) if there exist two reactions with the same reactant complexes whose corresponding rows in $F$ are not identical.
\end{definition}

We now state the Deficiency Zero and Deficiency One Theorems by Feinberg \cite{feinberg,feinberg12,feinberg2}.

\begin{theorem} (Deficiency Zero Theorem)
For any CRN of deficiency zero, the following statements hold:
\begin{enumerate}
\item[i.] If the network is not weakly reversible, then for arbitrary kinetics, the differential equations for the corresponding reaction system cannot admit
a equilibrium.
\item[ii.] If the network is not weakly reversible, then for arbitrary kinetics, the differential equations for the corresponding reaction system cannot admit
a cyclic composition trajectory containing a positive composition.
\item[iii.] If the network is weakly reversible, then for any mass action kinetics (but regardless
of the positive values the rate constants take), the resulting differential equations have the
following properties:\\
There exists within each positive stoichiometric compatibility class precisely one equilibrium; that equilibrium is asymptotically stable; there is no nontrivial cyclic composition
trajectory along which all species concentrations are positive.
\end{enumerate}
\end{theorem}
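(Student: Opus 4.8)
The plan is to handle the three assertions by genuinely different arguments: parts (i) and (ii), which speak about networks that are \emph{not} weakly reversible, reduce to an elementary fact about positive circulations on directed graphs once the hypothesis $\delta=0$ is used to pull the vanishing of the species formation rate back from $\mathbb{R}^{\mathscr S}$ to $\mathbb{R}^{\mathscr R}$; part (iii) is the substantive one and goes through the theory of complex balancing. The one structural identity used throughout is $\delta=\dim\!\left(\ker Y\cap\operatorname{Im}I_a\right)$, which follows immediately from $N=YI_a$, $s=\operatorname{rank}N$, $\operatorname{rank}I_a=n-l$, and the rank--nullity theorem; in particular $\delta=0$ says exactly that $Y$ is injective on $\operatorname{Im}I_a$.

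For (i), suppose $x^{*}\in\mathbb{R}^{\mathscr S}_{>0}$ were an equilibrium for some kinetics $K$. Then $0=f(x^{*})=NK(x^{*})=YI_aK(x^{*})$, so $I_aK(x^{*})\in\ker Y\cap\operatorname{Im}I_a=\{0\}$, forcing $I_aK(x^{*})=0$. By the positivity property of a kinetics, $K(x^{*})$ is strictly positive (since $\operatorname{supp}y\subset\operatorname{supp}x^{*}=\mathscr S$ for every reactant complex $y$), so $\ker I_a$ contains a strictly positive vector, i.e.\ the directed reaction graph carries a positive circulation. Since the support of such a circulation decomposes into directed cycles, every reaction lies on a directed cycle; hence each linkage class is strongly connected and $sl=l$, contradicting that the network is not weakly reversible. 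For (ii), integrate $\dot x=NK(x)$ over one period $T$ of a cyclic composition trajectory passing through a positive composition: since the orbit spends a set of positive time in $\mathbb{R}^{\mathscr S}_{>0}$, the vector $g=\int_{0}^{T}K(x(t))\,dt$ is strictly positive, and $YI_ag=x(T)-x(0)=0$ gives $I_ag=0$ as before, so again a positive circulation exists and the network must be weakly reversible.

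For (iii), assume the network is weakly reversible (and $\delta=0$). Write the mass action system as $\dot x=YA_{k}\psi(x)$, where $A_{k}$ is the (negative) weighted Laplacian of the reaction graph and $\psi(x)$ is the vector of complex monomials $\psi(x)_{C_j}=\prod_{i}x_i^{Y_{ij}}$. I would first show the system is complex balanced for every $k$, i.e.\ there is $c^{*}\in\mathbb{R}^{\mathscr S}_{>0}$ with $A_{k}\psi(c^{*})=0$; since $\operatorname{Im}A_{k}=\operatorname{Im}I_a$ and $\delta=0$, any such $c^{*}$ is automatically an equilibrium. Weak reversibility together with the Matrix--Tree theorem (applied to each linkage class, now a terminal strong linkage class) produces a strictly positive $p\in\ker A_{k}$; solving $\psi(c^{*})=p$ amounts to solving $Y^{\mathsf{T}}\ln c^{*}=\ln p$, which is possible precisely when $\ln p\in\operatorname{Im}Y^{\mathsf{T}}=(\ker Y)^{\perp}$. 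By disjointness of the linkage-class supports, $\ln p$ splits as a fixed vector built from the tree-constants plus an arbitrary vector constant on each linkage class, i.e.\ an arbitrary element of $(\operatorname{Im}I_a)^{\perp}$, and the identity $\dim(\ker Y\cap\operatorname{Im}I_a)=\delta=0$ is exactly what makes the restriction map $(\operatorname{Im}I_a)^{\perp}\to(\ker Y)^{*}$ surjective, so the free part can be chosen to place $\ln p$ in $(\ker Y)^{\perp}$. With $c^{*}$ in hand I would invoke the Horn--Jackson theorem: the pseudo-Helmholtz function $G(x)=\sum_{i}\bigl(x_i(\ln x_i-\ln c^{*}_i-1)+c^{*}_i\bigr)$ is nonincreasing along trajectories in $\mathbb{R}^{\mathscr S}_{>0}$ and strictly decreasing off the equilibrium set, which yields simultaneously that each positive stoichiometric compatibility class contains exactly one equilibrium, that it is asymptotically stable relative to that class, and that there is no nontrivial cyclic composition trajectory along which all concentrations are positive.

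I expect the only real obstacle to be the existence step in (iii) — verifying that the positive kernel vector $p$ of $A_{k}$ is attained by the factor map $\psi$, equivalently that $\ln p$ can be arranged orthogonal to $\ker Y$. This is the sole place where the deficiency-zero hypothesis does non-trivial work, through the surjectivity of the restriction map coming from $\ker Y\cap\operatorname{Im}I_a=\{0\}$ combined with the disjoint-support structure of $\ker A_{k}$ for a weakly reversible network; by contrast parts (i)--(ii) are pure graph theory and the uniqueness and stability conclusions of (iii) are the standard Lyapunov argument for complex-balanced systems.
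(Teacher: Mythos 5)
The paper offers no proof of this theorem --- it is quoted as the classical Deficiency Zero Theorem from Feinberg's cited works --- so there is no in-paper argument to compare against; your sketch reproduces the standard Feinberg--Horn--Jackson proof (the identity $\delta=\dim(\ker Y\cap\operatorname{Im}I_a)$, the positive-circulation argument for (i)--(ii), and complex balancing plus the pseudo-Helmholtz Lyapunov function for (iii)) and is essentially correct. The only point worth flagging is that parts (i)--(ii) as transcribed in the paper omit the word \emph{positive} (Feinberg's original forbids a \emph{positive} equilibrium and a cyclic trajectory through a positive composition), and your argument correctly proves only that version --- as it must, since boundary equilibria certainly exist for non-weakly-reversible deficiency-zero networks such as $A \to B$.
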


\begin{theorem} (Deficiency One Theorem)
Consider a mass action system for which
the underlying reaction network has l linkage classes. Let $\delta$ be the deficiency of the network
and let $\delta_\theta$ be the deficiency of the $\theta$th linkage class. Suppose that the following
conditions hold:
\begin{enumerate}
\item[i.] $\delta_\theta \le 1$ for each linkage class and
\item[ii.] the sum of the deficiencies of all the individual linkage classes is the deficiency of the whole network.
\end{enumerate}
If the network is weakly reversible, then for any mass action kinetics, the differential equations for the system admit
precisely one equilibrium in each positive stoichiometric compatibility class.
\end{theorem}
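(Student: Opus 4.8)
The Deficiency One Theorem is a result of Feinberg, so the plan is to reconstruct the structure of his argument: reduce the whole-network problem to the individual linkage classes using hypothesis (ii), treat the deficiency-zero and deficiency-one classes with different tools, and then reassemble. Existence and uniqueness will need separate arguments.

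First I would unpack hypothesis (ii). With $n=\sum_\theta n_\theta$ complexes, $l$ linkage classes, $s_\theta=\dim S_\theta$, and using $\delta=n-l-s$ together with $\delta_\theta=n_\theta-1-s_\theta$, condition (ii) is equivalent to $s=\sum_\theta s_\theta$. Since the full stoichiometric subspace is always the (a priori non-direct) sum $S=\sum_\theta S_\theta$, this equality upgrades it to a direct sum $S=\bigoplus_\theta S_\theta$. Writing the species formation rate function as $f=\sum_\theta f_\theta$, where $f_\theta(x)\in S_\theta$ collects the reactions of the $\theta$-th linkage class, the direct-sum structure yields $f(x)=0$ if and only if $f_\theta(x)=0$ for every $\theta$; hence $E_+(\mathscr{S},\mathscr{C},\mathscr{R},K)=\bigcap_\theta E_{+,\theta}$, the intersection of the positive-equilibrium sets of the linkage-class subsystems sharing the common concentration $x$. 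Being a linkage class of a weakly reversible network, each such subsystem is weakly reversible of deficiency $0$ or $1$.

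Next I would pin down the shape of each $E_{+,\theta}$. When $\delta_\theta=0$, the Deficiency Zero Theorem applied to the $\theta$-th linkage class (with its inherited mass action kinetics) shows its positive equilibria are exactly its complex-balanced equilibria and form a log-affine set $x_\theta^{*}\odot\exp(S_\theta^\perp)$ through any one equilibrium $x_\theta^{*}$, hence meet each compatibility class of that subnetwork in a single point. The deficiency-one classes are the hard case: using weak reversibility (strong connectivity) I would fix a spanning tree of the reaction graph to solve the stationarity flux relations, and then $\delta_\theta=1$ forces the remaining condition on $x$ to reduce, after taking logarithms of the monomials, to a single scalar binomial-type constraint, so that $E_{+,\theta}$ is again a tractable, essentially one-parameter log-affine family.

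Uniqueness within a compatibility class of the whole network then follows from the classical sign/monotonicity lemma in the spirit of Horn and Jackson: if $x,x^{*}$ are two positive equilibria with $x-x^{*}\in S=\bigoplus_\theta S_\theta$, the membership of $\ln x-\ln x^{*}$ in the complement forced by the deficiency structure is incompatible with $x-x^{*}\in S$ unless $x=x^{*}$. Existence in each compatibility class is obtained separately for weakly reversible networks, by a degree-theoretic fixed-point argument on the stationarity defect (or by Feinberg's original parametrization), with weak reversibility used to exclude boundary zeros and keep the degree nonzero. The main obstacle --- the reason this is a genuine theorem and not a corollary of the Deficiency Zero Theorem --- is the deficiency-one linkage-class analysis: proving that deficiency exactly $1$ collapses the stationarity locus to a one-parameter log-affine family, together with the companion injectivity-modulo-$S_\theta$ lemma, is the technical heart of Feinberg's proof and is substantially more delicate than the complex-balanced deficiency-zero case.
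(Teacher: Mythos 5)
The paper does not actually prove this statement: the Deficiency One Theorem is quoted verbatim as classical background and attributed to Feinberg \cite{feinberg,feinberg12,feinberg2}, so there is no in-paper argument to compare yours against. That said, your opening reduction is correct and matches the decomposition-theoretic viewpoint the paper uses elsewhere: condition (ii) is equivalent to $s=\sum_\theta s_\theta$, hence to independence of the linkage-class decomposition, and the equivalence $f(x)=0 \Leftrightarrow f_\theta(x)=0$ for all $\theta$ is exactly the content of the Feinberg Decomposition Theorem (Theorem \ref{feinberg:decom:thm}).

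As a proof, however, the proposal has genuine gaps, and you partly name them yourself. First, the deficiency-one linkage-class analysis --- the actual content of the theorem --- is asserted rather than carried out, and the assertion is not quite right: the description of $E_{+,\theta}$ as a ``one-parameter log-affine family'' is a feature of complex-balanced (deficiency-zero) equilibria; for a deficiency-one class Feinberg's uniqueness argument does not produce a coset of $S_\theta^\perp$ but instead derives sign conditions on $\mu=\ln x-\ln x^{*}$ from partitions of the complex set into upper, middle and lower classes, and shows such a $\mu$ cannot be sign-compatible with $S$ unless it vanishes. Your appeal to ``the classical sign/monotonicity lemma'' gestures at this but supplies none of it. Second, the reassembly step is too quick: if $x,x^{*}\in\bigcap_\theta E_{+,\theta}$ with $x-x^{*}\in S=\bigoplus_\theta S_\theta$, the components of $x-x^{*}$ in the individual $S_\theta$ are not differences of equilibria of the subsystems, so per-class uniqueness does not immediately yield global uniqueness; the per-class sign conditions must be combined into a single sign-incompatibility argument against the whole of $S$. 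Finally, ``precisely one'' also requires existence, which for weakly reversible networks under these hypotheses is a separate and substantial theorem (Feinberg \cite{feinberg2}); a one-line appeal to degree theory does not discharge it. In short, the skeleton is the right one, but the load-bearing steps are missing.
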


\subsection{Review of Decomposition Theory}
\label{sect:decomposition}

We now recall some definitions and earlier results from the decomposition theory of chemical reaction networks. 

\begin{definition}
A {\bf decomposition} of $\mathscr{N}$ is a set of subnetworks $\{\mathscr{N}_1, \mathscr{N}_2,...,\mathscr{N}_k\}$ of $\mathscr{N}$ induced by a partition $\{\mathscr{R}_1, \mathscr{R}_2,...,\mathscr{R}_k\}$ of its reaction set $\mathscr{R}$. 
\end{definition}

We denote a decomposition with 
$\mathscr{N} = \mathscr{N}_1 \cup \mathscr{N}_2 \cup ... \cup \mathscr{N}_k$
since $\mathscr{N}$ is a union of the subnetworks in the sense of \cite{GHMS2018}. It also follows immediately that, for the corresponding stoichiometric subspaces, 
${S} = {S}_1 + {S}_2 + ... + {S}_k$. It is also useful to consider refinements and coarsenings of decompositions.

\begin{definition}
A network decomposition $\mathscr{N} = \mathscr{N}_1 \cup \mathscr{N}_2 \cup ... \cup \mathscr{N}_k$  is a {\bf refinement} of
$\mathscr{N} = {\mathscr{N}'}_1 \cup {\mathscr{N}'}_2 \cup ... \cup {\mathscr{N}'}_{k'}$ 
(and the latter a coarsening of the former) if it is induced by a refinement  
$\{\mathscr{R}_1, \mathscr{R}_2,...,\mathscr{R}_k\}$
of $\{{\mathscr{R}'}_1 \cup {\mathscr{R}'}_2 \cup ... \cup {\mathscr{R}'}_{k'}\}$, i.e., 
each ${\mathscr{R}}_i$ is contained in an ${\mathscr{R}'}_j$. 
\end{definition}

In \cite{feinberg12}, Feinberg introduced the important concept of independent decomposition.

\begin{definition}
A network decomposition $\mathscr{N} = \mathscr{N}_1 \cup \mathscr{N}_2 \cup ... \cup \mathscr{N}_k$  is {\bf independent} if its stoichiometric subspace is a direct sum of the subnetwork stoichiometric subspaces.
\end{definition}

For any decomposition, it also holds that 
$\text{Im } I_a = \text{Im } I_{a,1} + ... + \text{Im } I_{a,k}$, where $$\text{Im } I_{a,i} = I_a(\mathbb{R}^{{\mathscr{R}}_i}).$$

\begin{definition}
A network decomposition is {\bf incidence-independent} if ${\rm{Im \ }} I_a$ is a direct sum of the ${\rm{Im \ }} I_{a,i}$. It is {\bf bi-independent} if it is both independent and incidence-independent.
\end{definition}

An equivalent formulation of showing incidence-independent is to satisfy $n - l = \sum {\left( {{n_i} - {l_i}} \right)}$, where $n_i$ is the number of complexes and $l_i$ is the number of linkage classes, in each subnetwork $i$.
It was shown that for independent decompositions, $\delta \le \delta_1 +\delta_2 ... +\delta_k$ \cite{fortun2}. On the other hand, for incidence-independent decompositions, $\delta \ge \delta_1 +\delta_2 ... +\delta_k$ \cite{FML2018}. 

Feinberg established the following relation between an independent decomposition and the set of positive equilibria of a kinetics on the network.

\begin{theorem} (Feinberg Decomposition Theorem \cite{feinberg12})
\label{feinberg:decom:thm}
Let $P(\mathscr{R})=\{\mathscr{R}_1, \mathscr{R}_2,...,\mathscr{R}_k\}$ be a partition of a CRN $\mathscr{N}$ and let $K$ be a kinetics on $\mathscr{N}$. If $\mathscr{N} = \mathscr{N}_1 \cup \mathscr{N}_2 \cup ... \cup \mathscr{N}_k$ is the network decomposition of $P(\mathscr{R})$ and ${E_ + }\left(\mathscr{N}_i,{K}_i\right)= \left\{ {x \in \mathbb{R}^\mathscr{S}_{>0}|N_iK_i(x) = 0} \right\}$ then
\[{E_ + }\left(\mathscr{N}_1,K_1\right) \cap {E_ + }\left(\mathscr{N}_2,K_2\right) \cap ... \cap {E_ + }\left(\mathscr{N}_k,K_k\right) \subseteq  {E_ + }\left(\mathscr{N},K\right).\]
If the network decomposition is independent, then equality holds.
\end{theorem}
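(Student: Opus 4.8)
The plan is to reduce the whole statement to one linear-algebraic observation: because $P(\mathscr{R})$ is a \emph{partition} of $\mathscr{R}$, the species formation rate function splits as a sum over the subnetworks. Writing $f = NK$ for the SFRF of $\mathscr{N}$ and, for each $i$, letting $K_i$ denote the restriction of $K$ to the reactions in $\mathscr{R}_i$ and $f_i = N_i K_i$ the corresponding SFRF of $\mathscr{N}_i$, I would first record the identity
\[
f(x) \;=\; \sum_{i=1}^{k} f_i(x) \qquad \text{for every } x \in \mathbb{R}^\mathscr{S}_{>0},
\]
which follows at once from $f(x) = \sum_{C_a \to C_b \in \mathscr{R}} K_{C_a \to C_b}(x)\,(C_b - C_a)$ by regrouping the reactions according to the blocks $\mathscr{R}_1, \dots, \mathscr{R}_k$ of the partition.

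For the inclusion $\bigcap_{i=1}^{k} E_+(\mathscr{N}_i, K_i) \subseteq E_+(\mathscr{N}, K)$ I would argue directly: if $x$ is a positive equilibrium of every subnetwork, then $f_i(x) = 0$ for all $i$, so the displayed identity gives $f(x) = \sum_{i=1}^k f_i(x) = 0$, i.e.\ $x \in E_+(\mathscr{N}, K)$. This step uses only the partition property and is valid for an arbitrary decomposition.

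The reverse inclusion is where independence is needed. Suppose $x \in E_+(\mathscr{N}, K)$, so that $\sum_{i=1}^{k} f_i(x) = 0$. The key point is that $f_i(x) = N_i K_i(x)$ is a linear combination of the reaction vectors of $\mathscr{N}_i$, hence $f_i(x) \in S_i$. Thus $0 = \sum_{i=1}^k f_i(x)$ is a representation of $0$ as a sum of vectors $f_i(x) \in S_i$; since the decomposition is independent, $S = S_1 \oplus \cdots \oplus S_k$, and in a direct sum such a representation of $0$ is unique, forcing $f_i(x) = 0$ for every $i$. Therefore $x \in E_+(\mathscr{N}_i, K_i)$ for all $i$, i.e.\ $x \in \bigcap_{i=1}^{k} E_+(\mathscr{N}_i, K_i)$, and combined with the first inclusion this gives the claimed equality.

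I do not anticipate a genuine obstacle; the only points requiring care are (i) checking that the restriction $K_i$ is indeed a kinetics on $\mathscr{N}_i$, so that $E_+(\mathscr{N}_i, K_i)$ is well defined (the domain and positivity conditions in the definition of a kinetics are inherited reaction-by-reaction), and (ii) making explicit that $N_i K_i(x)$, a priori merely an element of $\mathbb{R}^\mathscr{S}$, actually lies in $S_i = \mathrm{span}\{C_b - C_a \mid (C_a, C_b) \in \mathscr{R}_i\}$, which is precisely what justifies invoking uniqueness in the direct sum.
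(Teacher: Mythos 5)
Your proposal is correct: the paper states this theorem without proof (citing Feinberg 1987), and your argument --- splitting the SFRF as $f=\sum_i f_i$ over the partition for the inclusion, then using $f_i(x)\in S_i$ together with the uniqueness of the zero representation in the direct sum $S=S_1\oplus\cdots\oplus S_k$ for the reverse inclusion --- is exactly the standard proof from the cited source. No gaps; your two points of care (that each $K_i$ is a genuine kinetics on $\mathscr{N}_i$ and that $N_iK_i(x)$ lies in $S_i$) are precisely the right ones to flag.
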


The following theorem is the analogue of Feinberg's result for incidence-independent decompositions and complex-balanced equilibria \cite{FML2018}.

\begin{theorem}
\label{decomposition:thm:2}
Let $\mathscr{N}$ be a network, $K$ any kinetics and $\mathscr{N} = \mathscr{N}_1 \cup \mathscr{N}_2 \cup ... \cup \mathscr{N}_k$ an incidence-independent decomposition of weakly reversible subnetworks. Then $\mathscr{N}$ is weakly reversible and 
\begin{itemize}
\item[i.] ${Z_ + }\left( {\mathscr{N} ,K} \right) =  \cap {Z_ + }\left( {{\mathscr{N} _i},K} \right)$ for each subnetwork ${\mathscr{N} _i}$.
\item[ii.] If ${Z_ + }\left( {\mathscr{N},K} \right) \ne \emptyset $ then ${Z_ + }\left( {{\mathscr{N}_i},K} \right) \ne \emptyset $ for each subnetwork ${\mathscr{N} _i}$.
\item[iii.] If the decomposition is a $\mathscr{C}$-decomposition and $K$ a complex factorizable kinetics then  ${Z_ + }\left( {{\mathscr{N}_i},K} \right) \ne \emptyset $ for each subnetwork ${\mathscr{N} _i}$ implies that ${Z_ + }\left( {{\mathscr{N}},K} \right) \ne \emptyset $.
\end{itemize}
\end{theorem}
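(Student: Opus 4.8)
For each $i$, write $K_i$ for the restriction of $K$ to the reaction set $\mathscr{R}_i$, and regard $\mathbb{R}^{\mathscr{R}_i}\subseteq\mathbb{R}^{\mathscr{R}}$ and $\text{Im }I_{a,i}=I_a(\mathbb{R}^{\mathscr{R}_i})\subseteq\mathbb{R}^{\mathscr{C}}$ via the partitions. Since $I_a$ restricted to $\mathbb{R}^{\mathscr{R}_i}$ coincides with $I_{a,i}$, for every $x\in\mathbb{R}^{\mathscr{S}}_{>0}$ we have
\[
I_aK(x)=\sum_{i=1}^{k}I_{a,i}K_i(x),\qquad I_{a,i}K_i(x)\in\text{Im }I_{a,i},
\]
and $Z_+(\mathscr{N},K)=\{x\in\mathbb{R}^{\mathscr{S}}_{>0}:I_aK(x)=0\}$, likewise with $I_{a,i}K_i$ for the subnetworks. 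The plan is to read everything off this identity together with the directness of $\text{Im }I_a=\bigoplus_i\text{Im }I_{a,i}$.

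\noindent\textbf{Weak reversibility of $\mathscr{N}$.} I would use the standard fact that a CRN is weakly reversible if and only if its incidence map has a strictly positive vector in its kernel. Each $\mathscr{N}_i$ is weakly reversible, so there is $\mu_i\in\mathbb{R}^{\mathscr{R}_i}_{>0}$ with $I_{a,i}\mu_i=0$; then $\mu:=\mu_1+\dots+\mu_k$ is strictly positive on all of $\mathscr{R}$ (the $\mathscr{R}_i$ partition $\mathscr{R}$) and $I_a\mu=\sum_i I_{a,i}\mu_i=0$. Hence $\mathscr{N}$ is weakly reversible; incidence-independence is not needed for this part.

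\noindent\textbf{Parts (i) and (ii).} The inclusion $\bigcap_i Z_+(\mathscr{N}_i,K)\subseteq Z_+(\mathscr{N},K)$ is immediate from the displayed identity and holds for any decomposition. For the reverse inclusion let $x\in Z_+(\mathscr{N},K)$; then $0=I_aK(x)=\sum_i I_{a,i}K_i(x)$ exhibits $0$ as a sum whose $i$-th summand lies in $\text{Im }I_{a,i}$. By incidence-independence this sum is direct, so the representation of $0$ is unique and each $I_{a,i}K_i(x)=0$, i.e.\ $x\in\bigcap_i Z_+(\mathscr{N}_i,K)$. This gives (i), and (ii) follows at once: any point of $Z_+(\mathscr{N},K)=\bigcap_i Z_+(\mathscr{N}_i,K)$ lies in every $Z_+(\mathscr{N}_i,K)$.

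\noindent\textbf{Part (iii): the main obstacle.} Now $\{\mathscr{C}_i\}$ partitions $\mathscr{C}$, so $\mathbb{R}^{\mathscr{C}}=\bigoplus_i\mathbb{R}^{\mathscr{C}_i}$, and since a reaction of $\mathscr{R}_i$ involves only complexes of $\mathscr{C}_i$, the Laplacian $A_k:=I_a\,\mathrm{diag}(k)\,\rho'$ is block-diagonal, $A_k=\bigoplus_i A_{k,i}$. Complex factorizability gives $K(x)=I_k\psi_K(x)$ with $\psi_K(x)$ splitting as $(\psi_{K_i}(x))_i$, so $I_aK(x)=A_k\psi_K(x)$ decomposes blockwise into the $A_{k,i}\psi_{K_i}(x)$. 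By (i) the claim reduces to: if every $Z_+(\mathscr{N}_i,K)$ is nonempty, then they have a common point. I would attack this with the log-parametrization (generalized Horn--Jackson structure) of the complex-balanced locus of a weakly reversible complex factorizable system: once some $x_i^{*}\in Z_+(\mathscr{N}_i,K)$ exists, $Z_+(\mathscr{N}_i,K)$ is the coset $\{x>0:\log x-\log x_i^{*}\perp\widetilde S_i\}$ for the appropriate kinetic-order subspace $\widetilde S_i$ of $\mathscr{N}_i$, and $\bigcap_i(\log x_i^{*}+\widetilde S_i^{\perp})$ is nonempty precisely when the linear system $\langle v,w_i\rangle=\langle\log x_i^{*},w_i\rangle$ ($w_i\in\widetilde S_i$) is consistent. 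The genuinely non-routine step, where I expect the real work to lie and where the full content of ``$\mathscr{C}$-decomposition'' together with complex factorizability must be used, is to establish exactly this consistency --- in effect, that the $\widetilde S_i$ are compatible enough (sitting in direct sum inside $\mathbb{R}^{\mathscr{S}}$) that a single positive state is complex-balanced for all subnetworks at once, hence for $\mathscr{N}$. Everything else is short linear algebra once the displayed identity and the directness of $\bigoplus_i\text{Im }I_{a,i}$ are in place.
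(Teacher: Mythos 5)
First, note that the paper itself gives no proof of this theorem: it is quoted in the preliminaries as an imported result from Fari\~nas et al.\ \cite{FML2018}, so there is no in-paper argument to compare yours against; I can only assess your proposal on its own terms. Your treatment of weak reversibility and of parts (i) and (ii) is correct and is the standard argument: the criterion that a network is weakly reversible iff $\ker I_a$ meets $\mathbb{R}^{\mathscr{R}}_{>0}$, applied subnetwork-by-subnetwork and summed over the partition of $\mathscr{R}$, gives weak reversibility of $\mathscr{N}$; and the directness of $\operatorname{Im} I_a=\bigoplus_i \operatorname{Im} I_{a,i}$ forces each summand $I_{a,i}K_i(x)$ of $0=I_aK(x)$ to vanish, which is exactly (i), with (ii) an immediate consequence. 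These parts are complete.

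Part (iii), however, is not a proof but an honest description of where the proof would have to go, and that is precisely the part of the theorem with real content. Reducing (iii) via (i) to ``if every $Z_+(\mathscr{N}_i,K)$ is nonempty then $\bigcap_i Z_+(\mathscr{N}_i,K)$ is nonempty'' is fine, and the block-diagonality of $A_k$ for a $\mathscr{C}$-decomposition with complex factorizable kinetics is the right structural observation. But the log-parametrization step you then invoke is exactly the missing argument: you would need (a) that each $Z_+(\mathscr{N}_i,K)$, once nonempty, is a coset $\{x>0:\log x-\log x_i^{*}\in\widetilde S_i^{\perp}\}$ for a suitable kinetic-order subspace $\widetilde S_i$ --- which for general complex factorizable kinetics (beyond PL-RDK/mass action) is itself a theorem requiring proof --- and (b) that the affine subspaces $\log x_i^{*}+\widetilde S_i^{\perp}$ have a common point. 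For (b) you give no mechanism: the $\widetilde S_i$ are subspaces of $\mathbb{R}^{\mathscr{S}}$ determined by kinetic orders, and nothing in incidence-independence or in the $\mathscr{C}$-decomposition hypothesis (which lives on the complex and reaction sides, not the species side) obviously forces the consistency of the system $\langle v,w_i\rangle=\langle\log x_i^{*},w_i\rangle$ when the $\widetilde S_i$ overlap. As written, the proposal proves the theorem minus clause (iii); to count as a proof of the full statement you would need to supply the consistency argument, or an alternative route to producing a single $x$ with $A_{k,i}\psi_{K,i}(x)=0$ for all $i$ simultaneously.
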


\section{The $\mathscr{O}$-, $\mathscr{P}$-, and $\mathscr{F}$-decompositions of CRNs}
\label{orientations:decompositions}
We review the concepts and properties underlying HDA and its extension to PL-RDK systems in the context of decomposition theory \cite{hernandez,ji}.

\begin{definition} A subset $\mathscr{O}$ of $\mathscr{R}$ is said to be an {\bf orientation} if for every reaction $y \to y' \in \mathscr{R}$, either $y \to y' \in \mathscr{O}$ or $y' \to y \in \mathscr{O}$, but not both.
\end{definition}

For an orientation $\mathscr{O}$, we define a linear map ${L_\mathscr{O}}:{\mathbb {R}^\mathscr{O}} \to S$ such that
\[{L_\mathscr{O}}(\alpha)= \sum\limits_{y \to y' \in \mathscr{O}} {{\alpha _{y \to y'}}\left( {y' - y} \right)}.\]

Each orientation $\mathscr{O}$ defines a partition of $\mathscr{N}$ into $\mathscr{O}$ and its complement $\mathscr{O}'$, which generates the following decomposition:

\begin{definition}
For an orientation $\mathscr{O}$ on $\mathscr{N}$, the {\bf $\mathscr{O}$-decomposition} of $\mathscr{N}$ consists of the subnetworks $\mathscr{N}_\mathscr{O}$ and $\mathscr{N}_{\mathscr{O}'}$, i.e., $\mathscr{N}=\mathscr{N}_\mathscr{O} \cup \mathscr{N}_\mathscr{O'}$.
\end{definition}

We now review the important concept of ``equivalence classes'' from \cite{ji}.
Let $\left\{ {{v^l}} \right\}_{l = 1}^d$ be a basis for $Ker{L_\mathscr{O}}$.
If for $y \to y' \in \mathscr{O}$, $v_{y \to y'}^l =0$ for all $1 \le l \le d$ then the reaction $y \to y' $ belongs to the zeroth equivalence class $P_0$.
For $y \to y', {\overline y  \to \overline y '} \in \mathscr{O} \backslash P_0$, if there exists $\alpha \ne 0$ such that $v_{y \to y'}^l = \alpha v_{\overline y  \to \overline y '}^l$ for all $1 \le l \le d$, then the two reactions are in the same equivalence class denoted by $P_i$, $i \ne 0$.

The central concept of ``fundamental classes'' is actually the basis of the Higher Deficiency Algorithm of Ji and Feinberg. The reactions $y \to y'$ and $\overline y  \to \overline y '$ in $\mathscr{R}$ belong to the same {\bf fundamental class} if at least one of the following is satisfied \cite{ji}.
\begin{itemize}
\item[i.] $y \to y'$ and $\overline y  \to \overline y '$ are the same reaction.
\item[ii.] $y \to y'$ and $\overline y  \to \overline y '$ are reversible pair.
\item[iii.] Either $y \to y'$ or $y' \to y$, and either $\overline y  \to \overline y '$ or $\overline y'  \to \overline y $ are in the same equivalence class on $\mathscr{O}$.
\end{itemize}

It is worth mentioning that he orientation $\mathscr{O}$ is partitioned into equivalence classes while the reaction set $\mathscr{R}$ is partitioned into fundamental classes.

\begin{definition}
The {\bf $\mathscr{F}$-decomposition} of $\mathscr{N}$ is the decomposition generated by the partition of $\mathscr{R}$ into fundamental classes.
\end{definition}

\begin{theorem} \cite{hernandez2}
\label{PifC}
Let $\mathscr{N}_\mathscr{O}$ be the subnetwork of  $\mathscr{N}$ defined by the orientation $\mathscr{O}$ being a subset of $\mathscr{R}$. Then the following holds:
\begin{itemize}
\item [i.] The $\mathscr{P}$-decomposition of $\mathscr{N}_\mathscr{O}$ is independent if and only if the $\mathscr{F}$-decomposition of $\mathscr{N}$ is independent.
\item [ii.] The $\mathscr{P}$-decomposition of $\mathscr{N}_\mathscr{O}$ is incidence-independent if and only if the $\mathscr{F}$-decomposition of $\mathscr{N}$ is incidence-independent.
\item [iii.] The $\mathscr{P}$-decomposition of $\mathscr{N}_\mathscr{O}$ is bi-independent if and only if the $\mathscr{F}$-decomposition of $\mathscr{N}$ is bi-independent.
\end{itemize}
\end{theorem}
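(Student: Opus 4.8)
The plan is to transfer the question across a natural bijection between the equivalence classes of the orientation $\mathscr{O}$ and the fundamental classes of $\mathscr{R}$; once this bijection is in hand, both (i) and (ii) collapse to a single dimension count, because passing from a set of oriented reactions to the set of \emph{all} reactions lying over it changes neither the stoichiometric subspace nor the image of the incidence map.

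First I would pin down the bijection. For an equivalence class $P_i$ of $\mathscr{O}$ (the zeroth class $P_0$ included), put
\[
F_i := \{\, y \to y' \in \mathscr{R} \mid y \to y' \in P_i \text{ or } y' \to y \in P_i \,\},
\]
so that $F_i$ is exactly the set of reactions whose unique $\mathscr{O}$-representative lies in $P_i$, i.e. $F_i = P_i \cup \{\, \overline{r} \mid r \in P_i,\ \overline{r} \in \mathscr{R} \,\}$ with $\overline{r}$ the reverse of $r$. Using only the definition of fundamental class, I would check that each $F_i$ is a fundamental class (condition ii glues each reaction to its reverse, and condition iii glues two reactions precisely when their $\mathscr{O}$-representatives share an equivalence class), that distinct $P_i$ give distinct $F_i$, and that $\{F_i\}$ exhausts $\mathscr{R}$ (every reaction has exactly one $\mathscr{O}$-representative, and it lies in exactly one $P_i$). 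Thus $P_i \mapsto F_i$ is a bijection between the blocks of the $\mathscr{P}$-decomposition of $\mathscr{N}_\mathscr{O}$ and those of the $\mathscr{F}$-decomposition of $\mathscr{N}$; note also that $\mathscr{N}_\mathscr{O}$ and $\mathscr{N}$ have the same species and complex sets, so all the linear maps below live in the same ambient spaces.

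For (i), I would note that since $\mathscr{O}$ contains one reaction of every reversible pair and the reaction vector of $\overline{r}$ is the negative of that of $r$, one gets $S_{\mathscr{N}_\mathscr{O}} = S_{\mathscr{N}} =: S$ and, block by block, $S_{F_i} = S_{P_i}$. Independence of the $\mathscr{F}$-decomposition is the assertion $\dim S = \sum_i \dim S_{F_i}$, independence of the $\mathscr{P}$-decomposition is $\dim S = \sum_i \dim S_{P_i}$, and these coincide by the preceding identities. For (ii) the argument is verbatim with the incidence map replacing the reaction-vector map: $\overline{r}$ contributes $\omega_y - \omega_{y'} = -(\omega_{y'} - \omega_y)$, so $\text{Im } I_{a,\mathscr{N}_\mathscr{O}} = \text{Im } I_{a,\mathscr{N}}$ and $\text{Im } I_{a,F_i} = \text{Im } I_{a,P_i}$, whence the incidence-independence criterion $n - l = \sum_i (n_i - l_i)$ (equivalently $\dim \text{Im } I_a = \sum_i \dim \text{Im } I_{a,i}$) holds for one decomposition exactly when it holds for the other. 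Part (iii) is then immediate, being the conjunction of (i) and (ii). The one point I would take care over — the main obstacle, such as it is — is the bijection in the first step: I must make sure no two equivalence classes are fused into a single fundamental class and none is split, which needs a little attention to $P_0$ and to irreversible reactions (the latter having no partner to be glued to by condition ii). Everything after that is bookkeeping plus a dimension count.
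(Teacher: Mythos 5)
Your proof is correct. The paper states this theorem without proof, citing \cite{hernandez2}; your argument --- the bijection $P_i \mapsto F_i$ between equivalence classes of $\mathscr{O}$ and fundamental classes of $\mathscr{R}$, together with the observation that adjoining reverse reactions changes neither the span of the reaction vectors nor the image of the incidence map --- is precisely the mechanism the cited source relies on, so the approach is essentially the same.
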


We denote the zeroth equivalence class as $P_0$, the nontrivial equivalence classes as $P_1,P_2,...,P_{w_1}$ and the trivial equivalence classes as $P_{w_1+1},P_{w_1+2},...,$ $P_{w_1+w_2}$. If the zeroth equivalence class is nonempty, then there are $w_1+w_2+1=w+1$ equivalence classes where $w=w_1+w_2$.

We then get the sets $\left( {{C_i}\backslash {P_i}} \right)$ for $i=0,1,2,...,w$. Let $t$ be the number of nonempty sets excluding $P_0$.
If $P_0$ is nonempty, we define $\widetilde {\mathscr{P}}$-decomposition in the following manner: 
\[\left( {\bigcup\limits_{i = 0}^w {{P_i}} } \right) \cup \left( {\bigcup\limits_{i = 0}^t {\left( {{C_i}\backslash {P_i}} \right)} } \right).\]

We now establish our basic new results for the relationship of $\widetilde {\mathscr{P}}$- and ${\mathscr{F}}$-decompositions.

\begin{proposition}
The $\widetilde {\mathscr{P}}$-decomposition is a refinement of the ${\mathscr{F}}$-decomposition.
\end{proposition}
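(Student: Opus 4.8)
The plan is to reduce the statement to the combinatorial description of ``refinement'' recorded above: the $\widetilde{\mathscr{P}}$-decomposition refines the $\mathscr{F}$-decomposition exactly when every block of the partition inducing $\widetilde{\mathscr{P}}$ is contained in a fundamental class of $\mathscr{N}$. So the whole task is to place each block $P_i$ ($0 \le i \le w$) and each nonempty set $C_i \setminus P_i$ inside a single fundamental class.

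First I would make explicit the correspondence that is already built into the notation: for each index $i$, $C_i$ is the fundamental class generated by the equivalence class $P_i$, so that $P_i \subseteq C_i$, and $C_i$ is obtained from $P_i$ by adjoining the reverse reaction of every reversible pair having a member in $P_i$. The inclusion $P_i \subseteq C_i$ is immediate from part (iii) of the definition of a fundamental class, since any two reactions of $P_i$ already lie in one and the same equivalence class on $\mathscr{O}$ (for $i \ge 1$ via the proportionality relation on $\mathscr{O} \setminus P_0$, and for $i = 0$ because $P_0$ is itself one of the equivalence classes on $\mathscr{O}$). I would also want to record that the $C_i$ are pairwise disjoint and exhaust $\mathscr{R}$: the map sending $r \in \mathscr{R}$ to the index of the equivalence class containing the unique $\mathscr{O}$-representative of $r$ (namely $r$ itself when $r \in \mathscr{O}$, and its reverse otherwise, which lies in $\mathscr{O}$ because $\mathscr{O} \subseteq \mathscr{R}$ is an orientation) is constant along each of the generating relations (i)--(iii), hence constant on fundamental classes, so the fundamental classes are precisely its fibers, i.e., the sets $C_i$.

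With this preliminary in place the proof is a direct containment check. Each $P_i$ lies in the fundamental class $C_i$, and each nonempty $C_i \setminus P_i$ lies in $C_i$ as well; moreover these are exactly the blocks of the partition defining $\widetilde{\mathscr{P}}$, and they do form a partition of $\mathscr{R}$, since $\{P_i\}_{i=0}^{w}$ partitions $\mathscr{O}$ while $\{C_i \setminus P_i\}_i$ partitions $\mathscr{R} \setminus \mathscr{O}$ (each reaction outside $\mathscr{O}$ being the reverse of a unique reaction of $\mathscr{O}$). Hence every block of $\widetilde{\mathscr{P}}$ is contained in a block of the $\mathscr{F}$-decomposition, so the partition inducing $\widetilde{\mathscr{P}}$ refines the partition into fundamental classes, which is the assertion.

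There is essentially no computational content here; the only step needing care is the preliminary identification of the blocks of the $\mathscr{F}$-decomposition with the sets $C_i$ — in particular the treatment of the zeroth class $P_0$, where one must invoke the paper's convention that $P_0$ counts as an equivalence class on $\mathscr{O}$ and so generates a single fundamental class $C_0$ rather than splitting into singletons. Once that bookkeeping is fixed, the refinement falls out at once, because $\widetilde{\mathscr{P}}$ is by construction nothing but the regrouping of $\mathscr{O}$ into its equivalence classes together with the regrouping of $\mathscr{R} \setminus \mathscr{O}$ into the associated reverse-reaction sets, and each such piece sits inside one fundamental class.
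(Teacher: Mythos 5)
Your proposal is correct and follows essentially the same route as the paper's (much terser) proof: the blocks of the $\widetilde{\mathscr{P}}$-decomposition are exactly the sets $P_i$ and the nonempty $C_i \setminus P_i$, and each sits inside the fundamental class $C_i$, which is partitioned by $P_i$ and $C_i \setminus P_i$. The extra bookkeeping you supply --- verifying $P_i \subseteq C_i$, that the $C_i$ are precisely the fundamental classes, and the careful handling of $P_0$ --- is a faithful elaboration of what the paper leaves implicit rather than a different argument.
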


\begin{proof}
By definition, $\widetilde {\mathscr{P}}$-decomposition induces a partition of the reaction set of the whole network. For each $i$, $C_i$ is partitioned by $P_i$ and $C_i \backslash P_i$.
\end{proof}

\begin{proposition}
The $\widetilde {\mathscr{P}}$-decomposition is a refinement of the ${\mathscr{O}}$-decomposition.
\end{proposition}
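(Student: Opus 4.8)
The plan is to verify the definition of refinement directly: I must show that every cell of the reaction partition inducing the $\widetilde{\mathscr{P}}$-decomposition is contained in one of the two cells, $\mathscr{O}$ and $\mathscr{O}'$, of the partition $\{\mathscr{O},\mathscr{O}'\}$ inducing the $\mathscr{O}$-decomposition. The cells of the $\widetilde{\mathscr{P}}$-partition are the equivalence classes $P_0,P_1,\ldots,P_w$ together with the nonempty sets $C_i\backslash P_i$, where $C_i$ denotes the fundamental class associated with $P_i$.

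The first family is immediate: by construction the equivalence classes $P_0,\ldots,P_w$ form a partition of $\mathscr{O}$ (they are read off from a basis $\{v^l\}$ of the kernel of $L_\mathscr{O}:\mathbb{R}^\mathscr{O}\to S$), so $P_i\subseteq\mathscr{O}$ for every $i$. For the second family I would show $C_i\backslash P_i\subseteq\mathscr{O}'$ for each $i$. Fix $r\in C_i\backslash P_i$. Being in the fundamental class of $P_i$, $r$ is linked to a reaction $s\in P_i$ through conditions (i)--(iii); case (i) is ruled out since $r\ne s$. If (ii) applies, $r$ and $s$ form a reversible pair and $\bar r=s\in P_i$. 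If (iii) applies, one of $r,\bar r$ and one of $s,\bar s$ lie in a common equivalence class; but $s\in P_i\subseteq\mathscr{O}$ forces $\bar s\in\mathscr{O}'$, which belongs to no equivalence class, so $s$ itself lies in that class, namely $P_i$, whence one of $r,\bar r$ lies in $P_i$. In all cases, since $r\notin P_i$, we obtain $\bar r\in P_i\subseteq\mathscr{O}$; because an orientation never contains a reaction together with its reverse, $r\in\mathscr{R}\backslash\mathscr{O}=\mathscr{O}'$. Therefore each cell of the $\widetilde{\mathscr{P}}$-partition lies in $\mathscr{O}$ or in $\mathscr{O}'$, which is precisely the assertion.

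The only step that needs some attention --- and it is a minor one --- is the claim that any reaction of $C_i$ lying outside $P_i$ is the reverse of a reaction of $P_i$; equivalently, that two distinct equivalence classes are never merged into a single fundamental class. If the relation defined by (i)--(iii) is not already transitive, this is obtained by a short induction along a connecting chain, each step closing because every $P_h\subseteq\mathscr{O}$ and $\mathscr{O}$ contains no reaction together with its reverse. Everything else is routine bookkeeping with partitions, so I foresee no real difficulty; the case $P_0=\emptyset$ is handled verbatim.
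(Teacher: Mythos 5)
Your proof is correct and takes the same direct, definition-checking route as the paper: showing that every cell of the $\widetilde{\mathscr{P}}$-partition (each $P_i$ and each nonempty $C_i\backslash P_i$) is contained in one of the two cells $\mathscr{O}$, $\mathscr{O}'$ of the $\mathscr{O}$-partition. The paper dispatches this in a single sentence, whereas you actually carry out the only nontrivial verification (that $C_i\backslash P_i\subseteq\mathscr{O}'$, because any member of a fundamental class outside its equivalence class must be the reverse of a reaction in $\mathscr{O}$); there is no gap.
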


\begin{proof}
This follows from the fact that $\widetilde {\mathscr{O}}$-decomposition induces a partition of the reaction set of the whole network.
\end{proof}

\begin{proposition}
If a CRN is composed of irreversible reactions only, then
\begin{itemize}
\item [i.] $\widetilde {\mathscr{P}}$-decomposition is independent if and only if ${\mathscr{F}}$-decomposition is independent;
\item [ii.] $\widetilde {\mathscr{P}}$-decomposition is incidence-independent if and only if ${\mathscr{F}}$-decomposition is incidence-independent; and,
\item [iii.] $\widetilde {\mathscr{P}}$-decomposition is bi-independent if and only if ${\mathscr{F}}$-decomposition is bi-independent.
\end{itemize}
\end{proposition}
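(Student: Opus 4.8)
The plan is to show that, once all the reactions of $\mathscr{N}$ are irreversible, the $\widetilde{\mathscr{P}}$- and $\mathscr{F}$-decompositions are literally the same decomposition, so that (i)--(iii) become tautologies (and, for the route parallel to Theorem~\ref{PifC}, instances of that theorem applied to $\mathscr{N}_\mathscr{O}$). The first step is to pin down the orientation. An orientation $\mathscr{O}$ is by definition a subset of $\mathscr{R}$, and for a reaction $y\to y'\in\mathscr{R}$ the reverse $y'\to y$ is not in $\mathscr{R}$, hence not in $\mathscr{O}$; the defining property of an orientation then forces $y\to y'\in\mathscr{O}$. So $\mathscr{O}=\mathscr{R}$ is the unique orientation, $\mathscr{N}_\mathscr{O}=\mathscr{N}$, and the map $L_\mathscr{O}$, its kernel, and the equivalence classes $P_0,P_1,\dots,P_w$ are all intrinsic objects of $\mathscr{N}$.

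The second step is to evaluate the $\widetilde{\mathscr{P}}$-decomposition under this hypothesis. By the proposition already proved above, the $\widetilde{\mathscr{P}}$-decomposition refines the $\mathscr{F}$-decomposition by splitting each fundamental class $C_i$ into its orientation part $P_i$ and the remainder $C_i\backslash P_i$. A reaction $y\to y'$ in $C_i\backslash P_i$ is one whose reverse $y'\to y$ lies in $P_i\subseteq\mathscr{R}$, i.e.\ a reaction forming a reversible pair; since $\mathscr{N}$ has no reversible pairs, $C_i\backslash P_i=\emptyset$ for every $i$ and $t=0$. Therefore the $\widetilde{\mathscr{P}}$-decomposition of $\mathscr{N}$ is exactly $\{P_0,P_1,\dots,P_w\}$, which coincides with the $\mathscr{F}$-decomposition of $\mathscr{N}$ (equivalently, with the $\mathscr{P}$-decomposition of $\mathscr{N}_\mathscr{O}$). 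Since independence, incidence-independence and bi-independence of a decomposition depend only on the decomposition itself, each of (i)--(iii) now asserts that a decomposition has a property if and only if that same decomposition has that property, and all three hold at once; alternatively, one invokes parts (i)--(iii) of Theorem~\ref{PifC}.

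There is essentially no analytic obstacle here; the care that is needed is purely in matching definitions. One must check that $\mathscr{O}=\mathscr{R}$ is both admissible and forced, that every fundamental class sits inside $\mathscr{O}$ (so all the correction blocks $C_i\backslash P_i$ are empty), and --- the only genuinely fiddly point --- the role of the zeroth class $P_0$: the $\widetilde{\mathscr{P}}$-decomposition was introduced under the standing hypothesis $P_0\neq\emptyset$, so for $P_0=\emptyset$ one reads its defining formula without the $i=0$ block and the same identification still goes through (or treats that case as vacuous). A self-contained variant avoiding $\mathscr{N}_\mathscr{O}$ entirely would observe that, by the definition of $P_0$, every vector of $\mathrm{Ker}\,L_\mathscr{O}$ vanishes on the $P_0$-coordinates, so that breaking $P_0$ into one-reaction blocks would alter neither $\sum_i\dim S_i$ nor $\sum_i(n_i-l_i)$; but in the present setting no such breaking occurs, so even this observation is superfluous.
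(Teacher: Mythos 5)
Your argument is correct. Note that the paper states this proposition without any proof, so there is nothing to compare against; your write-up supplies exactly the justification the author presumably had in mind. The two essential observations are the ones you make: since no reaction of $\mathscr{R}$ has its reverse in $\mathscr{R}$, the defining property of an orientation forces $\mathscr{O}=\mathscr{R}$, and then condition (ii) in the definition of fundamental classes is vacuous while condition (iii) collapses to membership in a common equivalence class, so each fundamental class $C_i$ equals $P_i$ and every correction block $C_i\backslash P_i$ is empty. Hence the $\widetilde{\mathscr{P}}$- and $\mathscr{F}$-decompositions are the same partition of $\mathscr{R}$ and (i)--(iii) are immediate. Your side remarks are also well taken: the paper only defines the $\widetilde{\mathscr{P}}$-decomposition under the hypothesis that $P_0\neq\emptyset$, so flagging how to read the statement when $P_0=\emptyset$ is a legitimate (if pedantic) point of care, and the alternative route through Theorem \ref{PifC} applied to $\mathscr{N}_\mathscr{O}=\mathscr{N}$ is equally valid.
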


\section{On Types of Fundamental Decompositions of CRNs}
\label{funde_decomposition}

We begin this section with the classification of the subnetworks occurring in a $\mathscr{P}$-decomposition of a network into 3 types. Note that the subnetworks from the decomposition have deficiency either 0 or 1 only.

\begin{lemma} {\bf \cite{ji}}
Let $\mathscr{N}=\left(\mathscr{S},\mathscr{C},\mathscr{R}\right)$ be a CRN and $\mathscr{O}$ be an orientation. Let $\mathscr{{N}}_{\mathscr{O},i}$ for $i=0,1,2,...,w$ be defined as the subnetwork generated by all reactions in $P_i$. Then one of the following holds:
\begin{itemize}
\item [i.] The reaction vectors for $\mathscr{{N}}_{\mathscr{O},i}$ are linearly independent, and the subnetwork $\mathscr{{N}}_{\mathscr{O},i}$ based on $P_i$ forms a forest (i.e., a graph with no cycle) with deficiency 0.
\item [ii.] The reaction vectors are minimally dependent, and the subnetwork $\mathscr{{N}}_{\mathscr{O},i}$ based on $P_i$ forms a forest with deficiency 1.
\item [iii.] The reaction vectors are minimally dependent, and the subnetwork $\mathscr{{N}}_{\mathscr{O},i}$ based on $P_i$ forms a big cycle (with at least three vertices) with deficiency 0.
\end{itemize}
\label{ji254}
\end{lemma}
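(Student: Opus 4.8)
The plan is to study, for fixed $i$, the restriction $L_i := L_{\mathscr{O}}\big|_{\mathbb{R}^{P_i}}$, i.e. $L_i(u) = \sum_{y\to y'\in P_i} u_{y\to y'}(y'-y)$, whose image is exactly the span $S_i$ of the reaction vectors of $\mathscr{N}_{\mathscr{O},i}$, so that $s_i := \dim S_i = |P_i| - \dim\ker L_i$ by rank--nullity. The crux is the estimate $\dim\ker L_i \le 1$. To see it, note that extending any $u\in\ker L_i$ by zero to all of $\mathscr{O}$ yields an element of $\ker L_{\mathscr{O}}$, so $u$ is the restriction to $P_i$ of some $v=\sum_{l=1}^d c_l v^l$. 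For $i\neq 0$ the definition of the equivalence class $P_i$ says precisely that the $d$-tuples $\big(v^l_{y\to y'}\big)_{l=1}^d$, as $y\to y'$ ranges over $P_i$, are all nonzero scalar multiples of one fixed $w\in\mathbb{R}^d$; writing $\big(v^l_{y\to y'}\big)_l = \beta_{y\to y'}\,w$ with each $\beta_{y\to y'}\neq 0$, we obtain $v_{y\to y'} = (c\cdot w)\,\beta_{y\to y'}$, so $u$ lies in the one-dimensional span of $\beta := \big(\beta_{y\to y'}\big)_{y\to y'\in P_i}$. (For $i=0$ every $v^l$ vanishes on $P_0$, so $\ker L_0 = 0$ at once.) Hence $s_i\in\{|P_i|,\,|P_i|-1\}$; in the second case $\ker L_i = \operatorname{span}\{\beta\}$, and since $\beta$ has no zero coordinate the unique-up-to-scalar dependence among the reaction vectors has full support, i.e. the vectors are minimally dependent.

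Next I would bring in the graph of $\mathscr{N}_{\mathscr{O},i}$, with $n_i$ complexes and $l_i$ linkage classes. Because distinct reactions are distinct ordered pairs of complexes, $(C,C)\notin\mathscr{R}$, and an orientation contains at most one reaction of each reversible pair, this underlying graph is simple; therefore $|P_i|\ge n_i - l_i$, with equality exactly when the graph is a forest, and the cycle space of the graph has dimension $|P_i| - (n_i - l_i)$. Moreover, writing the incidence map of the subnetwork as $I_{a,i}$ and using $N_i = Y_i I_{a,i}$, one has $L_i = Y_i\circ I_{a,i}$, hence $\ker I_{a,i}\subseteq\ker L_i$, and $s_i \le \operatorname{rank} I_{a,i} = n_i - l_i$, so $\delta_i = (n_i - l_i) - s_i \ge 0$.

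Combining the two ingredients gives the trichotomy directly. If $s_i = |P_i|$, then $|P_i| = s_i \le n_i - l_i \le |P_i|$ forces $n_i - l_i = |P_i| = s_i$: the subnetwork is a forest with $\delta_i = 0$ (case i). If $s_i = |P_i| - 1$, then $|P_i| - 1 \le n_i - l_i \le |P_i|$. When $n_i - l_i = |P_i|$ the graph is a forest and $\delta_i = (n_i-l_i) - s_i = 1$ (case ii). When $n_i - l_i = |P_i| - 1$ the cycle space is one-dimensional and $\delta_i = 0$; here $\ker I_{a,i}$ is one-dimensional, equals $\ker L_i = \operatorname{span}\{\beta\}$, and is spanned by the (signed) indicator of a single cycle. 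Since $\beta$ --- a scalar multiple of that cycle vector --- has full support, every reaction of $P_i$ lies on that cycle, so $\mathscr{N}_{\mathscr{O},i}$ is connected and coincides with the cycle; as the graph is simple this cycle has at least three vertices (case iii).

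The step I expect to be the main obstacle is the final one: upgrading ``the subgraph has exactly one independent cycle'' to ``the subgraph \emph{is} a single cycle on at least three vertices''. This is exactly where the structure of a genuine equivalence class is used --- the parallelism of the kernel-basis columns along $P_i$ produces a full-support dependence $\beta$, which through the inclusion $\ker I_{a,i}\subseteq\ker L_i$ must coincide with the graph's cycle vector and hence cannot vanish on any ``bridge'' edge. Everything else reduces to rank--nullity, the standard bound $\dim S\le n-l$ for a CRN, and elementary facts about simple graphs; the case $P_0$ is handled separately, as its kernel contribution is automatically zero.
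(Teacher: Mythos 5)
Your argument is correct and complete. Note first that the paper itself offers no proof of this lemma: it is imported verbatim from Ji's dissertation \cite{ji} as a quoted result, so there is no in-paper proof to compare against. Judged on its own, your proof is a clean, self-contained reconstruction of the expected argument: the restriction $L_i=L_{\mathscr{O}}\big|_{\mathbb{R}^{P_i}}$ together with the zero-extension into $\ker L_{\mathscr{O}}$ and the proportionality defining an equivalence class correctly yields $\dim\ker L_i\le 1$ with a full-support spanning vector $\beta$ when the kernel is nontrivial (and $\ker L_0=0$ for the zeroth class); the factorization $L_i=Y_i\circ I_{a,i}$ gives $s_i\le n_i-l_i=\operatorname{rank}I_{a,i}$; and the simplicity of the underlying graph (no loops since $(C,C)\notin\mathscr{R}$, no parallel edges since an orientation excludes reversible partners) gives $n_i-l_i\le|P_i|$. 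Sandwiching these inequalities produces exactly the trichotomy, and you correctly identified and closed the one genuinely delicate step: in the subcase $n_i-l_i=|P_i|-1$, the one-dimensional cycle space of $I_{a,i}$ must coincide with $\ker L_i=\operatorname{span}\{\beta\}$, and the full support of $\beta$ forces every edge onto the unique cycle, so the subnetwork \emph{is} that cycle, necessarily of length at least three by simplicity. The handling of minimal dependence (any dependence supported on a proper subset would be a multiple of $\beta$ vanishing somewhere, hence zero) is also correct. I see no gap.
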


We denote the subnetwork classes in i, ii, and iii of Lemma \ref{ji254} as Type I, Type II and Type III subnetworks respectively. The classification was extended as follows: an $\mathscr{F}$-subnetwork is of type I, II or III if it contains a $\mathscr{P}$-subnetwork of type I, II or III, respectively. We denote the numbers of subnetworks (i.e., fundamental classes) for Types I, II and III with the symbols $w_I$, $w_{II}$ and $w_{III}$, respectively. We then introduce the following definition \cite{hernandez2}. 

\begin{definition}
An $\mathscr{F}$-decomposition is said to be
\begin{itemize}
\item[i.] {\bf Type I} if it contains Type I subnetwork only.
\item[ii.] {\bf Type II} if it contains Type II subnetwork only.
\item[iii.] {\bf Type III} if it contains Type III subnetwork only.
\end{itemize}
\end{definition}

The following results show that the independence, incidence-independence, and hence, bi-independence of the $\mathscr{F}$-decomposition of a CRN depend on the relationship between the deficiency of the network and the number of Type II subnetworks.

\begin{proposition} \cite{hernandez2}
\label{inde_wII}
If a CRN has independent $\mathscr{F}$-decomposition, then $\delta \le w_{II}$.
\end{proposition}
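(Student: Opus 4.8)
The plan is to transfer the statement to the $\mathscr{P}$-decomposition of an oriented subnetwork and then combine the deficiency inequality for independent decompositions with the type classification of Lemma~\ref{ji254}. Fix an orientation $\mathscr{O}$ of $\mathscr{N}$ and let $\mathscr{N}_\mathscr{O}=\mathscr{N}_{\mathscr{O},0}\cup \mathscr{N}_{\mathscr{O},1}\cup\cdots\cup\mathscr{N}_{\mathscr{O},w}$ be the $\mathscr{P}$-decomposition, where $\mathscr{N}_{\mathscr{O},i}$ is the subnetwork generated by the equivalence class $P_i$. Since the $\mathscr{F}$-decomposition of $\mathscr{N}$ is independent, Theorem~\ref{PifC}(i) shows that this $\mathscr{P}$-decomposition is independent as well, so the whole argument can be carried out on $\mathscr{N}_\mathscr{O}$.

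First I would invoke the known inequality that for any independent decomposition the deficiency of the ambient network is at most the sum of the deficiencies of the subnetworks \cite{fortun2}; applied to $\mathscr{N}_\mathscr{O}$ this yields $\delta(\mathscr{N}_\mathscr{O})\le \sum_{i=0}^{w}\delta(\mathscr{N}_{\mathscr{O},i})$. Next I would note that passing from $\mathscr{N}$ to $\mathscr{N}_\mathscr{O}$ only deletes one reaction from each reversible pair, which changes neither the complex set, nor the partition of complexes into linkage classes, nor the stoichiometric subspace; hence $n$, $l$, and $s$ are all preserved and $\delta(\mathscr{N}_\mathscr{O})=\delta$.

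It then remains to identify $\sum_{i=0}^{w}\delta(\mathscr{N}_{\mathscr{O},i})$ with $w_{II}$. By Lemma~\ref{ji254} each $\mathscr{N}_{\mathscr{O},i}$ has deficiency $0$ when it is of Type I or Type III and deficiency $1$ when it is of Type II, so this sum is exactly the number of Type II $\mathscr{P}$-subnetworks. Because the fundamental classes are in one-to-one correspondence with the equivalence classes $P_0,\dots,P_w$ (the fundamental class $C_i$ meets $\mathscr{O}$ in precisely $P_i$), and an $\mathscr{F}$-subnetwork is of Type II exactly when it contains a Type II $\mathscr{P}$-subnetwork, that number is $w_{II}$. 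Chaining the steps gives $\delta=\delta(\mathscr{N}_\mathscr{O})\le \sum_{i=0}^{w}\delta(\mathscr{N}_{\mathscr{O},i})=w_{II}$. The only delicate points are the bijection between fundamental classes and equivalence classes and the invariance $\delta(\mathscr{N}_\mathscr{O})=\delta$; both are bookkeeping facts from Section~\ref{orientations:decompositions}. As an alternative route avoiding Theorem~\ref{PifC}, one could apply \cite{fortun2} directly to the $\mathscr{F}$-decomposition after checking that appending the reverse reactions to a $\mathscr{P}$-subnetwork alters neither its deficiency (it adds no complexes, no linkage classes, and no new directions to the stoichiometric subspace) nor its type.
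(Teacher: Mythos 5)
Your proof is correct and rests on the same two ingredients as the paper's argument: the inequality $\delta \le \delta_1 + \cdots + \delta_k$ for independent decompositions from \cite{fortun2}, and the observation via Lemma~\ref{ji254} that only Type II subnetworks contribute deficiency $1$ (Types I and III contribute $0$), so the sum of subnetwork deficiencies is exactly $w_{II}$. The paper applies these directly to the $\mathscr{F}$-decomposition, which is precisely your ``alternative route''; your primary presentation merely adds a harmless detour through the $\mathscr{P}$-decomposition of $\mathscr{N}_\mathscr{O}$ via Theorem~\ref{PifC}, together with the (correct) bookkeeping facts that $\delta(\mathscr{N}_\mathscr{O})=\delta$ and that fundamental classes correspond bijectively to equivalence classes.
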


\begin{corollary}
If the deficiency $\delta > w_{II}$ of a CRN, then the $\mathscr{F}$-decomposition is not independent.
\end{corollary}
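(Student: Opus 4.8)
The plan is to observe that this corollary is nothing more than the contrapositive of Proposition~\ref{inde_wII}, so the entire argument consists of a single logical step once that proposition is invoked. I would set it up as a proof by contraposition (or equivalently by contradiction): suppose, contrary to the claim, that the $\mathscr{F}$-decomposition of the CRN \emph{is} independent.

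Having assumed independence, I would apply Proposition~\ref{inde_wII} directly to conclude $\delta \le w_{II}$. This immediately contradicts the standing hypothesis that $\delta > w_{II}$. Therefore the assumption of independence is untenable, and the $\mathscr{F}$-decomposition must fail to be independent, which is exactly the assertion of the corollary.

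There is essentially no obstacle to overcome here: all the substantive work—relating the deficiency of the network to the count $w_{II}$ of Type~II subnetworks in an independent $\mathscr{F}$-decomposition—has already been carried out in the proof of Proposition~\ref{inde_wII} (and ultimately rests on the type classification of $\mathscr{P}$-subnetworks in Lemma~\ref{ji254} together with the subadditivity of deficiency under independent decompositions, $\delta \le \delta_1 + \cdots + \delta_k$). The only thing this corollary adds is the convenient reformulation as a sufficient condition for \emph{non}-independence, which is often the direction one wants in practice when screening a network. Accordingly, I would keep the proof to one or two sentences and simply cite Proposition~\ref{inde_wII}.
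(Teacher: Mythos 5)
Your proposal is correct and matches the paper's intent exactly: the corollary is stated without proof precisely because it is the contrapositive of Proposition~\ref{inde_wII}, which is the one-step argument you give. Nothing further is needed.
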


\begin{proposition}
\label{inciinde_wII}
If a CRN has incidence-independent $\mathscr{F}$-decomposition, then $\delta \ge w_{II}$.
\end{proposition}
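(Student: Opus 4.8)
The plan is to mirror the proof of Proposition~\ref{inde_wII}, replacing the bound $\delta \le \delta_1 + \cdots + \delta_k$ available for independent decompositions by the bound $\delta \ge \delta_1 + \cdots + \delta_k$ available for incidence-independent ones (the inequality of \cite{FML2018} recalled just after the definition of incidence-independence). Concretely, let $\mathscr{N} = \mathscr{N}_1 \cup \cdots \cup \mathscr{N}_k$ be the $\mathscr{F}$-decomposition; since it is incidence-independent by hypothesis, $\delta \ge \delta_1 + \cdots + \delta_k$, so it suffices to show that $\delta_1 + \cdots + \delta_k = w_{II}$, i.e., that each $\mathscr{F}$-subnetwork has deficiency $1$ if it is of Type II and $0$ otherwise.

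For this I would first fix an orientation $\mathscr{O}$ and recall that each fundamental class is one of the equivalence classes $P_i$ on $\mathscr{O}$ together with the reverse arrows of the reversible pairs it meets; re-attaching those reverse arrows changes neither the complex set, nor the linkage classes, nor the stoichiometric subspace of the subnetwork, hence leaves its deficiency unchanged. Lemma~\ref{ji254} then gives deficiency $0$ for the subnetworks generated by Type I or Type III classes $P_i$ and deficiency $1$ for those generated by Type II classes. Since, by definition, an $\mathscr{F}$-subnetwork is of Type II exactly when it contains a Type II $\mathscr{P}$-subnetwork, there are $w_{II}$ subnetworks of deficiency $1$ and the remaining ones have deficiency $0$, whence $\sum_i \delta_i = w_{II}$. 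Combining with $\delta \ge \sum_i \delta_i$ yields $\delta \ge w_{II}$.

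The delicate point is the claim that passing from a $\mathscr{P}$-subnetwork $\mathscr{N}_{\mathscr{O},i}$ to the corresponding $\mathscr{F}$-subnetwork preserves the deficiency: one must verify that $n_i$, $l_i$ and $\dim S_i$ are each unaffected by adjoining the stripped-out reverse reactions. A clean way to avoid this bookkeeping is to apply the incidence-independence bound not to the $\mathscr{F}$-decomposition but to the $\mathscr{P}$-decomposition of $\mathscr{N}_{\mathscr{O}}$, which by Theorem~\ref{PifC}(ii) is incidence-independent iff the $\mathscr{F}$-decomposition is; its subnetworks have their deficiencies supplied directly by Lemma~\ref{ji254}, and $\delta(\mathscr{N}_{\mathscr{O}}) = \delta(\mathscr{N})$ since an orientation preserves the species, complexes, linkage classes and stoichiometric subspace of the whole network. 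Either route gives the stated bound.
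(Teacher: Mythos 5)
Your proof is correct and follows essentially the same route as the paper's: the incidence-independence inequality $\delta \ge \sum_i \delta_i$ from \cite{FML2018} combined with the observation that Type I and Type III subnetworks have deficiency $0$ while Type II subnetworks have deficiency $1$, so that $\sum_i \delta_i = w_{II}$. The only difference is that you explicitly justify a step the paper leaves implicit---that adjoining the reverse arrows to pass from a $\mathscr{P}$-subnetwork to the corresponding $\mathscr{F}$-subnetwork changes neither the complexes, the linkage classes, nor the stoichiometric subspace, hence not the deficiency---which is a worthwhile clarification but not a different argument.
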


\begin{proof}
Since the CRN has incidence-independent $\mathscr{F}$-decomposition, so $\delta \ge \delta_1 + \delta_2 +...+ \delta_w$. Thus, $\delta \ge \delta_1 + \delta_2 +...+ \delta_w$. Since each Type I or Type III subnetwork has zero deficiency, the deficiency is dependent on the number of Type II subnetworks, i.e., $\delta \ge w_{II}$.
\end{proof}

\begin{corollary}
If the deficiency $\delta < w_{II}$ of a CRN, then the $\mathscr{F}$-decomposition is not incidence-independent.
\end{corollary}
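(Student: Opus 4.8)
The plan is to obtain this corollary as the contrapositive of Proposition \ref{inciinde_wII}. That proposition asserts that incidence-independence of the $\mathscr{F}$-decomposition forces $\delta \ge w_{II}$; negating both sides, a CRN with $\delta < w_{II}$ cannot have an incidence-independent $\mathscr{F}$-decomposition, which is exactly the claim.

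Concretely, I would argue by contradiction. Suppose the $\mathscr{F}$-decomposition of the given CRN were incidence-independent. Then Proposition \ref{inciinde_wII} would yield $\delta \ge w_{II}$, contradicting the standing hypothesis $\delta < w_{II}$. Hence the $\mathscr{F}$-decomposition is not incidence-independent.

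If one prefers a self-contained argument, I would instead unwind the proof of Proposition \ref{inciinde_wII}: incidence-independence gives the superadditivity $\delta \ge \delta_1 + \delta_2 + \cdots + \delta_w$ from \cite{FML2018}, and by Lemma \ref{ji254} every Type I and Type III subnetwork has deficiency $0$ while every Type II subnetwork has deficiency $1$, so the right-hand side equals $w_{II}$; thus $\delta < w_{II}$ is incompatible with incidence-independence.

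I do not expect any genuine obstacle here, since the statement is a direct logical consequence of an already-established result. The only point requiring care is the direction of the deficiency-sum bound: for incidence-independent decompositions the relevant inequality is $\delta \ge \sum_i \delta_i$ (in contrast to $\delta \le \sum_i \delta_i$ for independent decompositions), so that the number of Type II subnetworks enters as a \emph{lower} bound for $\delta$, which is precisely what makes $\delta < w_{II}$ obstruct incidence-independence.
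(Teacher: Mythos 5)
Your proposal is correct and coincides with the paper's intended argument: the corollary is stated without proof precisely because it is the immediate contrapositive of Proposition \ref{inciinde_wII}, and your self-contained unwinding (superadditivity $\delta \ge \sum_i \delta_i$ for incidence-independent decompositions plus the fact that only Type II subnetworks contribute deficiency $1$) matches the paper's own proof of that proposition. Your remark about the direction of the inequality is exactly the right point of care, and no gap remains.
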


\begin{corollary}
\label{equal_wII}
If a CRN has bi-independent $\mathscr{F}$-decomposition, then $\delta = w_{II}$.
\end{corollary}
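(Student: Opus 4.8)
The plan is to derive the equality by sandwiching $\delta$ between $w_{II}$ from below and from above, using the two preceding propositions; no new machinery is needed. First I would unwind the hypothesis: by the definition of bi-independence recalled in Section~\ref{sect:decomposition}, a decomposition is bi-independent precisely when it is simultaneously independent and incidence-independent. Hence a CRN whose $\mathscr{F}$-decomposition is bi-independent has, in particular, an independent $\mathscr{F}$-decomposition \emph{and} an incidence-independent $\mathscr{F}$-decomposition, so both Proposition~\ref{inde_wII} and Proposition~\ref{inciinde_wII} apply to it.

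Next I would apply Proposition~\ref{inde_wII} to the independence half of the hypothesis to obtain $\delta \le w_{II}$, and then Proposition~\ref{inciinde_wII} to the incidence-independence half to obtain $\delta \ge w_{II}$. Combining these two inequalities gives $\delta = w_{II}$, which is exactly the assertion of Corollary~\ref{equal_wII}.

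Honestly, there is no real obstacle: the statement is a formal corollary of the two propositions it immediately follows, and the only thing to verify is the (trivial) observation that the single hypothesis ``bi-independent $\mathscr{F}$-decomposition'' simultaneously supplies the hypotheses of both propositions. If one instead wanted a proof that does not cite Propositions~\ref{inde_wII} and~\ref{inciinde_wII} as black boxes, the substance would come from the deficiency estimates $\delta \le \delta_1 + \dots + \delta_w$ for independent decompositions (from \cite{fortun2}) and $\delta \ge \delta_1 + \dots + \delta_w$ for incidence-independent decompositions (from \cite{FML2018}), combined with Lemma~\ref{ji254}: since Type~I and Type~III subnetworks have deficiency $0$ and Type~II subnetworks have deficiency $1$, one has $\delta_1 + \dots + \delta_w = w_{II}$, and bi-independence forces both inequalities to collapse to equality with this common value.
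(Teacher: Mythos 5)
Your proposal is correct and follows exactly the paper's own argument: the paper likewise deduces the equality directly by combining Proposition~\ref{inde_wII} ($\delta \le w_{II}$ from independence) with Proposition~\ref{inciinde_wII} ($\delta \ge w_{II}$ from incidence-independence). The extra unpacking you offer via the deficiency estimates and Lemma~\ref{ji254} is consistent with how those propositions are themselves proved, but it is not needed for the corollary.
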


\begin{proof}
The conclusion follows directly from Proposition \ref{inde_wII} and Proposition \ref{inciinde_wII}.
\end{proof}

\begin{example}
It was shown in \cite{hernandez2} that the following CRN for $k$-site distributive phosphorylation/ dephosphorylation:
\[\begin{array}{c}
{S_0} + K \mathbin{\lower.3ex\hbox{$\buildrel\textstyle\rightarrow\over
{\smash{\leftarrow}\vphantom{_{\vbox to.5ex{\vss}}}}$}} {S_0}K{\mkern 1mu}  \to {S_1} + K{\mkern 1mu}  \mathbin{\lower.3ex\hbox{$\buildrel\textstyle\rightarrow\over
{\smash{\leftarrow}\vphantom{_{\vbox to.5ex{\vss}}}}$}} {S_1}K \to {S_2} + K{\mkern 1mu}  \mathbin{\lower.3ex\hbox{$\buildrel\textstyle\rightarrow\over
{\smash{\leftarrow}\vphantom{_{\vbox to.5ex{\vss}}}}$}} ... \to {S_k} + K\\
{S_k} + F \mathbin{\lower.3ex\hbox{$\buildrel\textstyle\rightarrow\over
{\smash{\leftarrow}\vphantom{_{\vbox to.5ex{\vss}}}}$}} ...{\mkern 1mu} {\mkern 1mu}  \to {\mkern 1mu} {S_2} + F \mathbin{\lower.3ex\hbox{$\buildrel\textstyle\rightarrow\over
{\smash{\leftarrow}\vphantom{_{\vbox to.5ex{\vss}}}}$}} {S_2}F \to {S_1} + F \mathbin{\lower.3ex\hbox{$\buildrel\textstyle\rightarrow\over
{\smash{\leftarrow}\vphantom{_{\vbox to.5ex{\vss}}}}$}} {S_1}F \to {S_0} + F
\end{array}\]
in \cite{CODH2018} has bi-independent $\mathscr{F}$-decomposition with the following subnetworks:
\[\begin{array}{*{20}{c}}
{{S_i} + K \mathbin{\lower.3ex\hbox{$\buildrel\textstyle\rightarrow\over
{\smash{\leftarrow}\vphantom{_{\vbox to.5ex{\vss}}}}$}} {S_i}K \to {S_{i + 1}} + K}\\
{{S_{i + 1}} + F \mathbin{\lower.3ex\hbox{$\buildrel\textstyle\rightarrow\over
{\smash{\leftarrow}\vphantom{_{\vbox to.5ex{\vss}}}}$}} {S_{i + 1}}F \to {S_i} + F}
\end{array}{\text{ \ \ for }}i = 0,1,...,k - 1.\]
The CRN has $4k+2$ complexes and there are 2 linkage classes. In addition, the rank of the network is $3k$. Hence, the deficiency of the CRN is $\delta = (4k+2)-2-3k=k$. Note that each of the $k$ subnetwork is of Type II (forest of deficiency 1). Thus, the equality in Corollary \ref{equal_wII} is obtained.
\end{example}

We now give an example that determines whether a CRN has the capacity for multistationarity using (1) the MATLAB program (in Appendix \ref{matlabprogram}) that we developed for fundamental decompositions, (2) the DZT, and (3) the FDT.

%\section{Multistationarity of CRNs using a MATLAB program, DZT and FDT}
%\label{mult_comb}

\begin{example}
We consider the following CRN, endowed with the mass action kinetics.
\[ \begin{array}{lll}
R_1: M_1 \to M_2 & \ \ \ &  R_4: M_2  \to M_4\\
R_2: M_2  \to  M_3  &  \ \ \ & R_5: M_4 \to M_5 \\
R_3: M_3  \to M_1 &  \ \ \ & R_{6}: M_5 \to M_2\\
\end{array}\]
We use the program in Appendix \ref{matlabprogram} with the following input.
{\small
\begin{lstlisting}
model.id = 'SAMPLE'; 
model.name = 'SAMPLE';
model.species = struct('id', {'M1', 'M2', 'M3', 'M4', 'M5'});
model.reaction(1) = struct('id', 'M1->M2', 'reactant', struct('species', {'M1'}, 'stoichiometry', {1}), 'product', struct('species', {'M2'}, 'stoichiometry', {1}), 'reversible', false);
model.reaction(2) = struct('id', 'M2->M3', 'reactant', struct('species', {'M2'}, 'stoichiometry', {1}), 'product', struct('species', {'M3'}, 'stoichiometry', {1}), 'reversible', false);
model.reaction(3) = struct('id', 'M3->M1', 'reactant', struct('species', {'M3'}, 'stoichiometry', {1}), 'product', struct('species', {'M1'}, 'stoichiometry', {1}), 'reversible', false);
model.reaction(4) = struct('id', 'M2->M4', 'reactant', struct('species', {'M2'}, 'stoichiometry', {1}), 'product', struct('species', {'M4'}, 'stoichiometry', {1}), 'reversible', false);
model.reaction(5) = struct('id', 'M4->M5', 'reactant', struct('species', {'M4'}, 'stoichiometry', {1}), 'product', struct('species', {'M5'}, 'stoichiometry', {1}), 'reversible', false);
model.reaction(6) = struct('id', 'M5->M2', 'reactant', struct('species', {'M5'}, 'stoichiometry', {1}), 'product', struct('species', {'M2'}, 'stoichiometry', {1}), 'reversible', false);
inciinde(model)
\end{lstlisting}
}
Then the following output is obtained.
{\small
\begin{lstlisting}
SUBNETWORK 1:
M2->M4
M4->M5
M5->M2
SUBNETWORK 2:
M1->M2
M2->M3
M3->M1
CONCLUSION 1: The F-decomposition is INDEPENDENT.
CONCLUSION 2: The F-decomposition is INCIDENCE-INDEPENDENT.
CONCLUSION 3: The F-decomposition is BI-INDEPENDENT.
\end{lstlisting}
}
The subnetworks are cycles (of deficiency 0). By the DZT, each subnetwork does not have the capacity for multistationarity. It follows from the FDT that the network cannot admit multistationarity.
\end{example}

We now introduce this definition which groups CRNs into two with respect to the fundamental decomposition.

\begin{definition}
An $\mathscr{F}$-decomposition is said to be
\begin{itemize}
\item[i.] {\bf Type Zero} if it contains Type I or Type III subnetwork, and
\item[ii.] {\bf Type One} if it contains at least one Type II subnetwork.
\end{itemize}
\end{definition}

The following proposition generalizes the Deficiency Zero Theorem using the concept of Type Zero independent fundamental decomposition.

\begin{proposition}
Suppose a CRN has Type Zero independent fundamental decomposition. Then
\begin{itemize}
\item[i.] if at least one of the subnetworks is not weakly reversible, then for arbitrary kinetics, the differential equations for the corresponding reaction system cannot admit an equilibrium;
\item[ii.] if at least one of the subnetworks is not weakly reversible, then for arbitrary kinetics, the differential equations for the corresponding reaction system cannot admit a cyclic composition trajectory containing a positive composition; and
\item[iii.] if each subnetwork is weakly reversible, then for any mass action kinetics (but regardless
of the positive values the rate constants take), the resulting differential equations cannot admit multiple equilibria.
\end{itemize}
\end{proposition}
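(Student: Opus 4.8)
The plan rests on one structural observation and then proceeds part by part. Since the $\mathscr{F}$-decomposition is of Type Zero, every subnetwork $\mathscr{N}_i$ is of Type I or Type III, hence has deficiency $\delta_i=0$ by Lemma \ref{ji254}; equivalently, $\ker Y_i\cap\,\text{Im } I_{a,i}=\{0\}$ for each $i$ (the standard reformulation of $\delta_i=0$). I would also record two elementary facts: the species formation rate function splits as $f(x)=\sum_i N_iK_i(x)$ with $N_iK_i(x)\in S_i$, and, by independence, $S=S_1\oplus\cdots\oplus S_k$.

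For parts (i) and (ii) I would prove a single lemma and read off both statements by contraposition: \emph{if some positive composition is an equilibrium of the system, or lies on a cyclic composition trajectory, then every $\mathscr{N}_i$ is weakly reversible.} In the cyclic case, let $x(\cdot)$ be $T$-periodic with $x(t_0)$ positive; integrating $\dot x=f(x)$ over a period gives $\sum_i N_i\bigl(\int_0^T K_i(x(t))\,dt\bigr)=0$, and since the $i$-th summand lies in $S_i$, independence forces $N_i\bar v_i=0$ with $\bar v_i:=\int_0^T K_i(x(t))\,dt$. The positivity property of $K$ (plus continuity of the rate functions) makes the integrand strictly positive near $t_0$ and nonnegative throughout, so $\bar v_i\gg 0$; then $I_{a,i}\bar v_i\in\ker Y_i\cap\,\text{Im } I_{a,i}=\{0\}$, hence $I_{a,i}\bar v_i=0$, and a strictly positive circulation on the reaction digraph decomposes into directed cycles meeting every edge, so $\mathscr{N}_i$ is weakly reversible. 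The equilibrium case is identical with $\bar v_i:=K_i(x^\ast)$; alternatively, (i) is immediate from the Deficiency Zero Theorem (part i) applied to a non-weakly-reversible subnetwork together with the Feinberg Decomposition Theorem (Theorem \ref{feinberg:decom:thm}) and independence, which give $E_+(\mathscr{N},K)\subseteq E_+(\mathscr{N}_j,K_j)=\emptyset$.

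For part (iii) I would fix positive rate constants on $\mathscr{N}$, restrict them to each $\mathscr{N}_i$, and use the full Deficiency Zero conclusion for weakly reversible mass action systems of deficiency zero: the positive equilibrium set of $(\mathscr{N}_i,K_i)$ is nonempty and is a single coset of $S_i^\perp$ in logarithmic coordinates, i.e.\ $\ln x-\ln x'\in S_i^\perp$ whenever $x,x'\in E_+(\mathscr{N}_i,K_i)$. If $x^\ast,x^{\ast\ast}$ are stoichiometrically compatible positive equilibria of $(\mathscr{N},K)$, then by the Feinberg Decomposition Theorem and independence both lie in every $E_+(\mathscr{N}_i,K_i)$, so $\ln x^\ast-\ln x^{\ast\ast}\in\bigcap_i S_i^\perp=\bigl(\sum_i S_i\bigr)^\perp=S^\perp$; combined with $x^\ast-x^{\ast\ast}\in S$ this yields $\langle x^\ast-x^{\ast\ast},\,\ln x^\ast-\ln x^{\ast\ast}\rangle=0$, and since $(a-b)(\ln a-\ln b)\ge 0$ with equality iff $a=b$, we conclude $x^\ast=x^{\ast\ast}$.

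The main obstacle is part (iii): the uniqueness furnished by the Deficiency Zero Theorem is stated relative to each \emph{subnetwork's} stoichiometric compatibility classes, which need not coincide with those of $\mathscr{N}$, so that uniqueness does not transfer on its own; what is needed is the stronger (Horn--Jackson / Feinberg) description of the equilibrium set as a log-coset of $S_i^\perp$, after which the identity $\bigcap_i S_i^\perp=S^\perp$ and the logarithmic inequality close the argument. In (ii) the subtle point is that a periodic orbit of $\mathscr{N}$ need not be a periodic orbit of any $\mathscr{N}_i$ and there is no ``Feinberg Decomposition Theorem for cyclic trajectories'' to invoke; integrating the split vector field over one period is the device that replaces it.
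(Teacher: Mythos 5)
Your proof is correct, and it is far more careful than the paper's own argument, which consists of the single sentence ``This follows from the DZT and the FDT.'' On part (i) you and the paper coincide: every Type I or Type III subnetwork has deficiency zero, so DZT(i) applied to a non-weakly-reversible $\mathscr{N}_j$ gives $E_+(\mathscr{N}_j,K_j)=\emptyset$, and the equality case of Theorem \ref{feinberg:decom:thm} (which is exactly where independence is used) forces $E_+(\mathscr{N},K)=\emptyset$. Where you genuinely depart from the paper is precisely at the two points where ``DZT plus FDT'' does not literally suffice, and both of your repairs are the right ones. For (ii), the FDT concerns equilibria only and a periodic orbit of $\mathscr{N}$ is not a periodic orbit of any $\mathscr{N}_i$; your device of integrating the split vector field over one period, using independence of $S=S_1\oplus\cdots\oplus S_k$ to kill each $N_i\bar v_i$ separately, and then running the deficiency-zero argument ($\ker Y_i\cap\operatorname{Im}I_{a,i}=\{0\}$, positive circulation implies weak reversibility) inside each subnetwork is the proof of DZT(i)--(ii) transplanted to the decomposed setting. (A route closer to the paper's intent would note that independence plus $\delta_i=0$ for all $i$ forces $\delta=0$ for the whole network and then invoke DZT(ii) directly, but that requires the whole network to fail weak reversibility, which does not follow formally from one subnetwork failing it; your argument avoids this.) For (iii), you correctly identify the real gap: DZT(iii) gives uniqueness only within each subnetwork's own stoichiometric compatibility classes, which are coarser than those of $\mathscr{N}$, so uniqueness does not transfer by itself. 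Passing to the Horn--Jackson/Feinberg description of each $E_+(\mathscr{N}_i,K_i)$ as a log-coset of $S_i^\perp$, intersecting via $\bigcap_i S_i^\perp=\bigl(\sum_i S_i\bigr)^\perp=S^\perp$, and closing with the strict monotonicity of the logarithm is the standard and correct way to finish. The only points worth flagging are cosmetic: the positivity of $\bar v_i$ tacitly uses continuity of the rate functions along the trajectory (standard but not stated in the paper's definition of kinetics), and in (iii) one should say explicitly that the two equilibria being compared are assumed stoichiometrically compatible with respect to $S$, which is what the multistationarity claim requires.
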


\begin{proof}
This follows from the DZT and the FDT.
\end{proof}

The subnetwork of the Schmitz's carbon cycle model in \cite{fortun2,schmitz} was generalized in \cite{hernandez}, which is an instance of an independent Type III $\mathscr{F}$-decomposition. An illustration of the graph is given in Figure \ref{cyclesgraph1}.

\begin{theorem} \cite{hernandez2}
\label{cycles:graph}
The following family of CRNs has bi-independent Type III $\mathscr{F}$-decomposition such that the $\mathscr{N}_i$'s are precisely the fundamental classes under the decomposition:
$\mathscr{N} =\{\mathscr{N}_i | \mathscr{N}_i = (\mathscr{C}_i, \mathscr{R}_i)\}$ with a (possibly broken) chain of long monomolecular directed cycles, i.e. of length 
$\ge$ 3, and $|\mathscr{C}_i \cap \mathscr{C}_j| \le 1$ if $j = i + 1$ for $i = 0,1,...,k-1$.
\end{theorem}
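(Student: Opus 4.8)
The plan is to fix the (essentially forced) orientation of the network, identify the fundamental classes with the given cycles $\mathscr{N}_i$, read the subnetwork type off Lemma~\ref{ji254}, and then obtain bi-independence from the disjointness of the supports of the subspaces $S_i$ (and of the $\mathrm{Im}\,I_{a,i}$). First I would note that, since the $\mathscr{N}_i$ are simple directed cycles of length $\ge 3$ meeting pairwise in at most one complex, no reaction occurs together with its reverse, so $\mathscr{R}$ itself is the only orientation and I take $\mathscr{O}=\mathscr{R}$. Identifying each monomolecular complex $C$ with the basis vector $\omega_C\in\mathbb{R}^{\mathscr{S}}$, the kernel $\mathrm{Ker}\,L_{\mathscr{O}}$ is exactly the circulation space of the reaction digraph, of dimension the circuit rank $r-n+l$. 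I would compute this rank by noting that the reaction digraph arises from the $k$ vertex-disjoint cycles $\mathscr{N}_0,\dots,\mathscr{N}_{k-1}$ by successively identifying one vertex of a piece with one vertex of another along the (possibly broken) chain; each identification lowers both $n$ and $l$ by one and leaves $r$ fixed, so $r-n+l$ is invariant and equals its value $k$ for the disjoint cycles. The indicator vectors $v^0,\dots,v^{k-1}\in\mathbb{R}^{\mathscr{R}}$ of the reaction sets $\mathscr{R}_0,\dots,\mathscr{R}_{k-1}$ each lie in $\mathrm{Ker}\,L_{\mathscr{O}}$ (the associated sum telescopes around the cycle) and have pairwise disjoint supports, so they form a basis of $\mathrm{Ker}\,L_{\mathscr{O}}$. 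In this basis the coordinate functional attached to a reaction simply records which $\mathscr{R}_i$ contains it; hence two reactions are equivalent iff they lie in the same $\mathscr{R}_i$, the zeroth class $P_0$ is empty, and (all reactions being irreversible with no reversible pairs) the fundamental classes coincide with $\mathscr{R}_0,\dots,\mathscr{R}_{k-1}$. Thus the $\mathscr{F}$-decomposition is precisely $\mathscr{N}=\mathscr{N}_0\cup\cdots\cup\mathscr{N}_{k-1}$.

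Next I would settle the type and bi-independence. Each $\mathscr{N}_i$ is a directed cycle on $n_i\ge 3$ distinct species, so its $n_i$ reaction vectors are minimally dependent and $\mathscr{N}_i$ is a big cycle of deficiency $n_i-1-(n_i-1)=0$; by Lemma~\ref{ji254}(iii) the subnetwork $\mathscr{N}_i$ — which here is simultaneously the $\mathscr{P}$- and the $\mathscr{F}$-subnetwork, since $P_0=\emptyset$ and there are no reversible pairs — is of Type III, whence the $\mathscr{F}$-decomposition is of Type III. For independence, observe that $S_i=\mathrm{span}\{\omega_{C'}-\omega_C\mid C\to C'\in\mathscr{R}_i\}$ is the hyperplane of zero coordinate-sum inside $\mathbb{R}^{\mathscr{S}_i}$, with $\mathscr{S}_i$ the species set of $\mathscr{N}_i$, because a directed cycle is a connected digraph on $\mathscr{S}_i$. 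I would show $S_0+\cdots+S_{k-1}$ is direct by induction on $k$: removing $\mathscr{N}_{k-1}$, the chain structure forces $\mathscr{S}_{k-1}\cap(\mathscr{S}_0\cup\cdots\cup\mathscr{S}_{k-2})\subseteq\mathscr{S}_{k-1}\cap\mathscr{S}_{k-2}$, a set with at most one element $w$; hence a vector in $S_{k-1}\cap(S_0+\cdots+S_{k-2})$ is a scalar multiple of $\omega_w$, but lying in $S_{k-1}$ it has coordinate-sum zero and so vanishes. Therefore $S=\bigoplus_i S_i$, i.e. the decomposition is independent. Incidence-independence follows by the identical argument in $\mathbb{R}^{\mathscr{C}}$: $\mathrm{Im}\,I_{a,i}$ is the zero coordinate-sum hyperplane of $\mathbb{R}^{\mathscr{C}_i}$, and the $\mathscr{C}_i$ obey the same overlap bound, so $\mathrm{Im}\,I_a=\bigoplus_i\mathrm{Im}\,I_{a,i}$. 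Hence the $\mathscr{F}$-decomposition is bi-independent, completing the argument.

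The step I expect to require the most care is confirming that the cycle indicators actually span $\mathrm{Ker}\,L_{\mathscr{O}}$, i.e. that the circuit rank of the possibly broken chain is exactly $k$ and that no extra circulations appear at the shared complexes; the identification-count argument must be made airtight (for instance, that gluing the $k$ cycles into $p$ connected pieces contributes exactly $k-p$ complex identifications, regardless of whether a cycle meets one or both of its chain-neighbours at the same vertex). Everything after the fundamental classes are pinned down is routine. As an alternative to the direct kernel computation one could invoke Theorem~\ref{PifC}: since $\mathscr{O}=\mathscr{R}$ gives $\mathscr{N}_{\mathscr{O}}=\mathscr{N}$, bi-independence of the $\mathscr{F}$-decomposition of $\mathscr{N}$ is equivalent to bi-independence of the $\mathscr{P}$-decomposition of $\mathscr{N}$, which is again the cycle decomposition and is handled by the same support argument.
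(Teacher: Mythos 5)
The paper does not actually prove this statement: Theorem~\ref{cycles:graph} is imported verbatim from the earlier work \cite{hernandez2}, so there is no in-text argument to compare yours against. Judged on its own, your proof is correct and is a reasonable reconstruction of what such a proof must look like given the machinery of Sections~\ref{orientations:decompositions} and~\ref{funde_decomposition}: the orientation is forced to be all of $\mathscr{R}$ (no reversible pairs), $\mathrm{Ker}\,L_{\mathscr{O}}$ is the circulation space of the monomolecular digraph, the disjointly supported cycle indicators form a basis of it, so the equivalence classes (hence the fundamental classes) are exactly the $\mathscr{R}_i$, each subnetwork falls under case (iii) of Lemma~\ref{ji254}, and bi-independence follows from the zero-sum-hyperplane description of $S_i$ and $\mathrm{Im}\,I_{a,i}$ together with the single-vertex overlaps; this is also consistent with Corollary~\ref{equal_wII}, since $\delta=w_{II}=0$ here. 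Two small points deserve flagging. First, your entire argument (the circuit-rank count, the basis claim, and the induction step $\mathscr{S}_{k-1}\cap(\mathscr{S}_0\cup\cdots\cup\mathscr{S}_{k-2})\subseteq\mathscr{S}_{k-1}\cap\mathscr{S}_{k-2}$) silently uses that \emph{non-consecutive} cycles are disjoint; the theorem as stated only bounds $|\mathscr{C}_i\cap\mathscr{C}_j|$ for $j=i+1$, so you should state explicitly that you are reading ``chain'' (as in Figure~\ref{cyclesgraph1}) to mean $\mathscr{C}_i\cap\mathscr{C}_j=\emptyset$ for $|i-j|\ge 2$ --- without that the claim is false (three cycles pairwise glued at three distinct vertices create an extra circuit and the fundamental classes merge). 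Second, your closing remark that one could instead route through Theorem~\ref{PifC} is right but buys nothing here, since with $\mathscr{O}=\mathscr{R}$ the $\mathscr{P}$- and $\mathscr{F}$-decompositions coincide anyway.
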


\begin{figure*}
\begin{center}
\includegraphics[width=14cm,height=28cm,keepaspectratio]{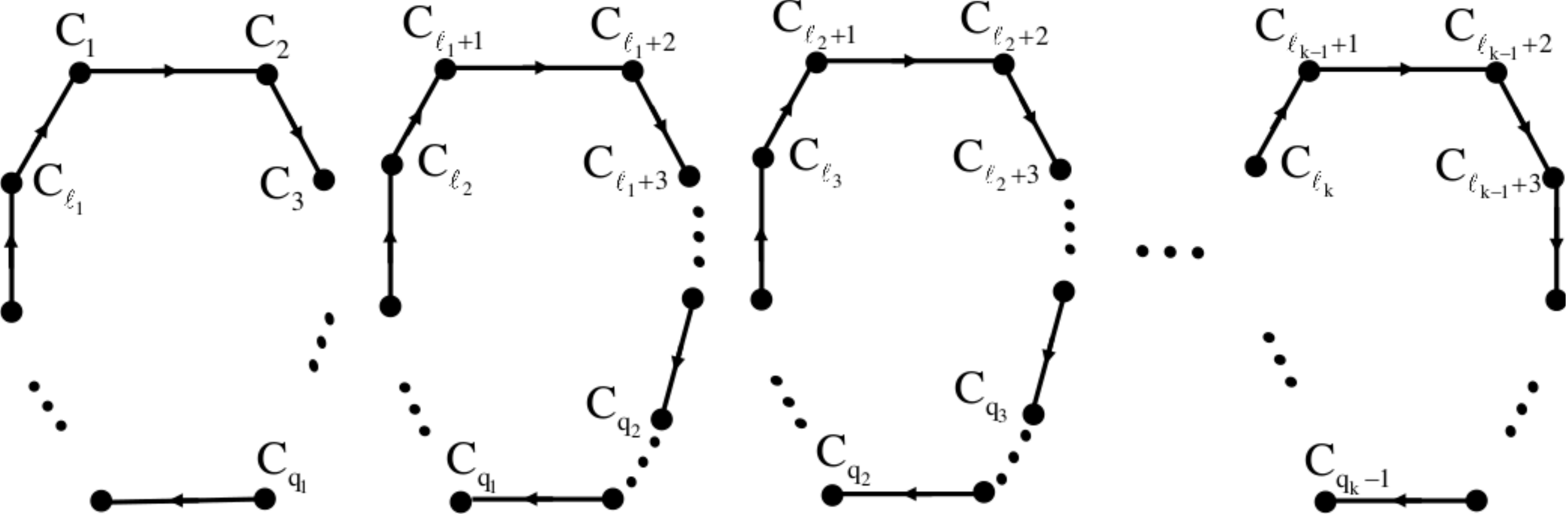}
\caption{An illustration of the graph with no break in Theorem \ref{cycles:graph}  \cite{hernandez2}.}
\label{cyclesgraph1}
\end{center}
\end{figure*}

The following result analyzes whether the mass action kinetics with underlying CRN in Theorem \ref{cycles:graph} has the capacity for multistationarity.

\begin{example}
\label{cycles:graph2}
The CRN
$\mathscr{N} =\{\mathscr{N}_i | \mathscr{N}_i = (\mathscr{C}_i, \mathscr{R}_i)\}$ with a (possibly broken) chain of long monomolecular directed cycles, i.e. of length 
$\ge$ 3, and $|\mathscr{C}_i \cap \mathscr{C}_j| \le 1$ if $j = i + 1$ for $i = 0,1,...,k-1$, endowed with mass action kinetics, does not have the capacity for multistationarity, i.e., the system admits at most one equilibrium.
\end{example}

\section{Fundamental Decompositions of CRNs under the CF-RM Transformation}
\label{decomposition_CFRM}

In this section, we present a transformation method whose key property is that it maps an irreversible reaction (a reversible pair of reactions) of the original system to an irreversible reaction (a reversible pair of reactions) of the target system. In other words, it is reversibility and irreversibility (RI) preserving.
This method was based on the generic CF-RM method (transformation of complex factorizable kinetics by reactant multiples)
which converts a PL-NDK to a PL-RDK system.
We add in the notation CF-RI  a sub-index ``+'' for two reasons: to indicate the ``positive'' (or preserving) relation and to highlight its partial coincidence with the CF-RM$_+$ variant of CF-RM. The following presents the steps of the CF-RM transformation method in \cite{cfrm}. 
\begin{itemize}
\item [1.] Determine the set of reactant complexes $\rho \left(\mathscr{R} \right)$.
\item [2.] Leave each CF-reactant complex unchanged.
\item [3.] At an NF-reactant complex, select a CF-subset containing the highest number of reactions and leave this CF-subset unchanged. For each of the remaining $N_R(y)-1$ CF-subsets, choose successively a multiple of $y$ which is not among the
current set of reactants. Different procedures are possible for the selection of a new reactant as long as
it is different from those in the current reactant set. After each choice, the current set is updated.
\end{itemize}

CF-RM$_+$ is a variant of CF-RM. All the steps are identical with the generic CF-RM method except that it uses additional criteria in the selection of the new reactant multiples. CF-RM$_+$ chooses the reactant multiple so that the new reactant differs from all existing complexes and all the new product complexes in the CF-subset also differ from all existing complexes \cite{cfrm}. Note that the CF-RM$_+$ method given in \cite{cfrm} updates the set of current complexes and complexes in the transform after each CF-subset of an NF-node is processed. The CF-RI$_+$ method proceeds as follows:
\begin{itemize}
\item[1.] Determine the reactant set $\rho({\mathscr{R})}$ and identify the subset $\rho({\mathscr{R})}_{CF}$ of CF-nodes.
\item[2.] If the reaction set ${\mathscr{R}_y:=\rho^{-1}(y)}$
of a CF-node $y$ has no reversible reaction with an NF-node, then it is left unchanged.  
\item[3.] At an NF-node without reversible reactions, carry out the steps of CF-RM$_+$.  
\item[4.] At an NF-node with a reversible reaction, among the CF-subsets without a reversible reaction (if there are any), select one with the highest number of reactions and leave this unchanged.
\item[5.] For the remaining CF-subsets without a reversible reaction, carry out CF-RM$_+$.
\item[6.] For a CF-subset with a reversible reaction, carry out CF-RM$_+$, but in addition, for each reversible reaction, also for the CF-subset of the reverse reaction (with the same ``catalytic'' complex). If the reactant complex of the reverse reaction is an NF-node, this additional step removes the original CF-subset from the reaction set of that NF-node. If this removal transforms the NF-node to a CF-node, then remove the node from the list of NF-nodes.
\end{itemize}

The following theorem relates the independence of the fundamental decomposition of a system and its CF-RI$_+$ transform.

\begin{theorem} \cite{hernandez2}
\label{independent2}
Let $({\mathscr{N}}, K)$ be a PL-NDK system and $({\mathscr{N}}_{RI}, K_{RI})$ a CF-RI$_+$ transform. Then
\begin{itemize}
\item[i.] for any orientation ${\mathscr{O}}$ of ${\mathscr{N}}$, $|{\mathscr{O}}| = |{\mathscr{O}_{RI}}|$, and
\item[ii.] the ${\mathscr{F}}$-decomposition of ${\mathscr{N}}$ is independent if and only if the ${\mathscr{F}}$-decomposition of ${\mathscr{N}}_{RI}$ is independent.
\end{itemize}
\end{theorem}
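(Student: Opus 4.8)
The plan is to base everything on two structural facts about the CF-RI$_+$ transformation and then hand the remaining work over to Theorem~\ref{PifC}. The first fact is that CF-RI$_+$ induces a bijection $\mathscr{R}\to\mathscr{R}_{RI}$ carrying each reaction $y\to y'$ to a reaction $\tilde y\to\tilde y'$ with the \emph{same reaction vector}, $\tilde y'-\tilde y=y'-y$: the reactant-multiple step replaces a reactant complex $y$ by a multiple $\lambda y$ and compensates in the product complex precisely so as to keep the species-formation rate function unchanged, and the ``differs from all existing complexes'' conditions built into CF-RM$_+$ (hence into CF-RI$_+$) guarantee that no two reactions are collapsed. The second fact is that this bijection is RI-preserving — irreversible reactions go to irreversible reactions and reversible pairs to reversible pairs — which is the defining feature of CF-RI$_+$. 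From the first fact, the reaction vectors of $\mathscr{N}$ and $\mathscr{N}_{RI}$ coincide under the bijection, so $S=S_{RI}$ and, for every subset $\mathscr{R}'\subseteq\mathscr{R}$, $\mathrm{span}\{y'-y:\ y\to y'\in\mathscr{R}'\}=\mathrm{span}\{\tilde y'-\tilde y:\ \tilde y\to\tilde y'\in\mathscr{R}'_{RI}\}$.

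For part (i), I would fix an orientation $\mathscr{O}$ of $\mathscr{N}$ and let $\mathscr{O}_{RI}$ be its image under the bijection. Using the RI-preserving property, $\mathscr{O}_{RI}$ is an orientation of $\mathscr{N}_{RI}$: each irreversible reaction of $\mathscr{N}_{RI}$ is forced into it and each reversible pair contributes exactly one direction. Since $|\mathscr{O}|$ is just (number of irreversible reactions) $+$ (number of reversible pairs), an invariant that the bijection preserves, one concludes $|\mathscr{O}|=|\mathscr{O}_{RI}|$.

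For part (ii), I would work with the same $\mathscr{O}$ and $\mathscr{O}_{RI}$. Under the identification $\mathbb{R}^{\mathscr{O}}\cong\mathbb{R}^{\mathscr{O}_{RI}}$ the linear maps $L_\mathscr{O}$ and $L_{\mathscr{O}_{RI}}$ agree, because they send corresponding basis vectors to the same reaction vector and have the common codomain $S=S_{RI}$; hence $\mathrm{Ker}\,L_\mathscr{O}=\mathrm{Ker}\,L_{\mathscr{O}_{RI}}$. Therefore the partition of $\mathscr{O}$ into equivalence classes $P_0,P_1,\dots,P_w$ is transported verbatim to the partition of $\mathscr{O}_{RI}$, and by the span identity above the stoichiometric subspace attached to each class is the same on both sides. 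Consequently the $\mathscr{P}$-decomposition of $\mathscr{N}_\mathscr{O}$ is independent if and only if the $\mathscr{P}$-decomposition of $(\mathscr{N}_{RI})_{\mathscr{O}_{RI}}$ is. Invoking Theorem~\ref{PifC}(i) once for $\mathscr{N}$ and once for $\mathscr{N}_{RI}$ then converts both sides into independence of the respective $\mathscr{F}$-decompositions, which is the claim.

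The main obstacle is the first structural fact: one has to chase through the explicit steps of CF-RM$_+$ inside CF-RI$_+$ to confirm that the successive reactant-multiple choices never identify two distinct reactions and never change a reaction vector, and that step~6 — which additionally re-bases the reverse reaction of a reversible pair with the same ``catalytic'' complex — keeps the RI-preserving property intact. All of these are bookkeeping properties of the transformation (essentially contained in the construction of CF-RI$_+$ in \cite{cfrm,hernandez2}); once they are secured, part (i) is immediate and part (ii) follows from the single observation that $L_\mathscr{O}$ depends only on the reaction vectors, combined with Theorem~\ref{PifC}.
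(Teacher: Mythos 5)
This theorem is imported from \cite{hernandez2}; the present paper states it without proof, so there is no in-text argument to measure yours against. Judged on its own, your reconstruction is sound and is consistent with the route the paper itself relies on when it chains Theorem \ref{PifC} with Theorem \ref{independent2} to prove Theorem \ref{independent}. The two pillars you isolate are the right ones: CF-RI$_+$ only replaces a reactant complex $y$ by a multiple $\lambda y$ and translates the product complex by the same $(\lambda-1)y$, so every reaction vector $y'-y$ is preserved (visible in Example \ref{not_incidence_independent}, where $A\to 0$ becomes $2A\to A$ with reaction vector $-A$ in both cases), while the ``differs from all existing complexes'' clause of CF-RM$_+$ prevents two reactions from being identified, and RI-preservation holds by construction of CF-RI$_+$. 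Granting these, $|\mathscr{O}|=|\mathscr{O}_{RI}|$ is immediate; $L_{\mathscr{O}}$ and $L_{\mathscr{O}_{RI}}$ coincide under the induced identification of $\mathbb{R}^{\mathscr{O}}$ with $\mathbb{R}^{\mathscr{O}_{RI}}$, so the equivalence classes and hence the fundamental classes transfer, the classwise stoichiometric subspaces agree, and Theorem \ref{PifC}(i) applied on both sides yields part (ii). Your closing caveat is the honest one: the only substantive work is the bookkeeping through steps 3--6 of CF-RI$_+$ (especially step 6, which re-bases the reverse reaction of a reversible pair) to confirm that no reaction vector changes and no reactions collapse; that verification resides in \cite{cfrm} and \cite{hernandez2} rather than in this paper. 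Note also that your argument correctly does not extend to incidence-independence, since the incidence map depends on the complexes themselves and not merely on the reaction vectors, which is exactly why the incidence analogue fails, as Example \ref{not_incidence_independent} demonstrates.
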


The following theorem is a restatement of Theorems \ref{PifC} and \ref{independent2}. This implies that with or without the application of the CF-RM Transformation, the computation in the Higher Deficiency Algorithm are the same with the assumption that the $\mathscr{P}$-decomposition or the $\mathscr{F}$-decomposition is independent. We assume that if we have the symbol $^*$ in the notation, we are dealing with the CF-RM applied to it.

\begin{theorem}
\label{independent}
Let $\left(\mathscr{S},\mathscr{C},\mathscr{R}\right)$ be a CRN and $\mathscr{O}$ be an orientation. Suppose under the CF-RM transformation, the reversibility and irreversibility of the reactions are retained, with an induced orientation $\mathscr{O^*}$. Then $|\mathscr{O}|=|\mathscr{O}^*|$ and the following are equivalent.
\begin{itemize}
\item[i.] The $\mathscr{P}$-decomposition is independent.
\item[ii.] The $\mathscr{F}$-decomposition is independent.
\item[iii.] The $\mathscr{P}^*$-decomposition is independent.
\item[iv.] The $\mathscr{F}^*$-decomposition is independent.
\end{itemize}
\end{theorem}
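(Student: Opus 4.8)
The plan is to obtain the fourfold equivalence by splicing together the two structural results already in hand: Theorem~\ref{PifC}(i), which ties $\mathscr{P}$-independence of $\mathscr{N}_\mathscr{O}$ to $\mathscr{F}$-independence of $\mathscr{N}$ for a \emph{fixed} network, and Theorem~\ref{independent2}(ii), which ties $\mathscr{F}$-independence of a system to that of its CF-RI$_+$ transform. The observation that makes this legitimate is that independence of a decomposition is a purely network-theoretic notion — it refers only to the stoichiometric subspace $S$ and whether it splits as the direct sum of the subnetwork stoichiometric subspaces — so the kinetics attached to the network plays no role, and Theorem~\ref{independent2} may be read as a statement about the underlying networks alone, valid for any CF-RM transform that preserves reversibility and irreversibility. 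The standing hypothesis of Theorem~\ref{independent} is precisely this RI-preservation, which is the defining feature guaranteed by CF-RI$_+$; hence $\mathscr{N}^*$ is a legitimate instance of the transformed network $\mathscr{N}_{RI}$.

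First I would dispatch the cardinality claim $|\mathscr{O}| = |\mathscr{O}^*|$. A CF-RM transformation only replaces reactant complexes by reactant multiples and neither creates nor deletes reactions, so $|\mathscr{R}| = |\mathscr{R}^*|$; under the RI-preservation hypothesis it carries each irreversible reaction to an irreversible reaction and each reversible pair to a reversible pair, so the number of irreversible reactions and the number of reversible pairs are individually preserved. Since an orientation consists of every irreversible reaction together with one representative from each reversible pair, $|\mathscr{O}|$ equals the number of irreversible reactions plus the number of reversible pairs, and the induced orientation $\mathscr{O}^*$ is obtained by transporting the choice made by $\mathscr{O}$ reaction by reaction; therefore $|\mathscr{O}| = |\mathscr{O}^*|$. (Alternatively, this is exactly Theorem~\ref{independent2}(i).)

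For the equivalence of (i)--(iv) I would prove three biconditionals and chain them. For (i)$\Leftrightarrow$(ii), apply Theorem~\ref{PifC}(i) to $\mathscr{N}$ with the orientation $\mathscr{O}$: the $\mathscr{P}$-decomposition of $\mathscr{N}_\mathscr{O}$ is independent iff the $\mathscr{F}$-decomposition of $\mathscr{N}$ is independent. For (iii)$\Leftrightarrow$(iv), apply the same theorem to the transformed network $\mathscr{N}^*$ with the induced orientation $\mathscr{O}^*$. For (ii)$\Leftrightarrow$(iv), apply Theorem~\ref{independent2}(ii) with $\mathscr{N}^*$ in the role of $\mathscr{N}_{RI}$: the $\mathscr{F}$-decomposition of $\mathscr{N}$ is independent iff the $\mathscr{F}$-decomposition of $\mathscr{N}^*$ is independent. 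Concatenating (i)$\Leftrightarrow$(ii), (ii)$\Leftrightarrow$(iv), and (iv)$\Leftrightarrow$(iii) gives the result.

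The only step that genuinely needs care — and the one I expect a careful reader to scrutinize — is the compatibility of the hypotheses of the two cited theorems: Theorem~\ref{independent2} is phrased for CF-RI$_+$ applied to a PL-NDK \emph{system}, whereas Theorem~\ref{independent} speaks only of a CF-RM transformation of a network that retains reversibility and irreversibility. I would make explicit that (a) independence is insensitive to the kinetics, so discarding it is harmless, and (b) an RI-preserving CF-RM transform is exactly the network-level data of a CF-RI$_+$ transform, so no generality is lost; it is worth remarking that the freedom in choosing reactant multiples within CF-RM does not affect the stoichiometric subspaces of the fundamental classes — indeed that invariance is precisely what Theorem~\ref{independent2} already packages. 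Everything beyond this is bookkeeping.
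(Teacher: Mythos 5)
Your proposal is correct and follows essentially the same route as the paper: both chain Theorem~\ref{PifC} (linking $\mathscr{P}$- and $\mathscr{F}$-independence for the original and the transformed network separately) with Theorem~\ref{independent2} (linking the original network to its RI-preserving transform), differing only in which redundant biconditional is used to close the cycle. Your added remarks on the cardinality claim and on reconciling the hypotheses of the two cited theorems are more explicit than the paper's one-line proof but do not change the argument.
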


\begin{proof}
(i) $ \Leftrightarrow $ (iii) follows from Theorem \ref{independent2}.
(i) $ \Leftrightarrow $ (ii) follows from Theorem \ref{PifC}.
(iii) $ \Leftrightarrow $ (iv) follows from Theorem \ref{PifC}.
Finally, (ii) $ \Leftrightarrow $ (iv) follows from Theorem \ref{independent2}.
\end{proof}

\begin{remark}
If reversibility and irreversibility of the reactions under the CF-RM are retained, Theorem \ref{independent} implies that the computation in the HDA are just the same for independent $\mathscr{P}$-decomposition or $\mathscr{F}$-decomposition.
\end{remark}

The following example will show that we do not have equivalence statements as in Theorem \ref{independent} for incidence-independence assuming the same conditions.

\begin{example}
Consider the following reaction network given in \cite{hernandez}.\\
$R_1: 0 \to A_1$\\
$R_2: A_1 \to 0$\\
$R_3: A_1 \to 2A_1$\\
$R_4: 2A_1 \to 0$

The network has a unique linkage class which is precisely a terminal strong linkage class. The rank of the network is 1 and has a deficiency of 1. Moreover, it is weakly reversible. Consider the following kinetic order values: for $R_1: 0$,  for $R_2: 0.5$, for $R_3: 1$, and for $R_4: 0.5$. Thus, the system is PL-NDK. Using the CF-RM transformation, we modify $R_3: 3A_1 \to 4A_1$. We obtain the following network with deficiency 2 \cite{hernandez}.

Note that the reversibility and the irreversibility of the reactions remain the same after the application of CF-RM. We can actually verify that indeed without using the CF-RM, one can directly apply the HDA and the computation yields the same results.
\end{example}

\begin{example}
\label{not_incidence_independent}
Consider the following reaction network with its kinetic order matrix.
$$\bordermatrix{%
& A & B & C \cr
R_1: A \to B & 1 & 0 & 0  \cr
R_2: B \to C & 0 & 1 & 0  \cr
R_3: A \to 0 & 0.5 & 0 & 0  \cr
R_4: B \to 0 & 0 & 0.5 & 0  \cr
R_5: C \to 0 & 0 & 0 & 1  \cr
}$$
The following is a basis for $Ker L_{\mathscr{O}}$.
\[\left( {\begin{array}{*{20}{c}}
1&{ 1}\\
1&0\\
-1&-1\\
0&1\\
1&0
\end{array}} \right)\]
Consequently, the equivalence classes coincide with the fundamental classes.
$P_1=C_1=\{R_1,R_3\}$, $P_2=C_2=\{R_2,R_5\}$, $P_3=C_3=\{R_4\}$

Below are the incidence matrices of the equivalence classes (which are equal to the corresponding fundamental classes).
$$\bordermatrix{%
& R_1 & R_3 \cr
A & -1 & 0   \cr
B & 1 & -1  \cr
0 & 0 & 1  \cr
} \ \ \ 
\bordermatrix{%
& R_2 & R_5 \cr
B & -1 & 0   \cr
C & 1 & -1  \cr
0 & 0 & 1  \cr
} \ \ \ 
\bordermatrix{%
& R_4 \cr
B & -1   \cr
0 & 1  \cr
}$$
Hence, the direct sum of the incidence matrices has rank 5.
On the other hand, below is the incidence matrix of the whole network.
$$\bordermatrix{%
& R_1 & R_2 & R_3 & R_4 & R_5 \cr
A  & -1 & 0 & -1 & 0 & 0 \cr
B  & 1 & -1 & 0 & -1 & 0 \cr
C  & 0 & 1 & 0 & 0 & -1 \cr
0  & 0 & 0 & 1 & 1 & 1 \cr
}$$
We see that the incidence matrix has rank less than 5.
Therefore, the ${\mathscr{P}}$-decomposition is not incidence-independent. Also, the ${\mathscr{F}}$-decomposition is not incidence-independent.

Now, we use the CF-RM to transform the PL-NDK to a dynamically equivalent PL-RDK.
The branching nodes are $A$ and $B$. For the node $A$, the branching reactions are $R_1$ and $R_3$. We choose to change $R_3: A \to 0$ to $R_3^*: 2A \to A$. For the node $B$, the branching reactions are $R_2$ and $R_4$. We choose to change $R_4: B \to 0$ to $R_4^*: 2B \to B$. Hence, we have the following resulting reaction network.
$$\bordermatrix{%
& A & B & C \cr
R_1: A \to B & 1 & 0 & 0  \cr
R_2: B \to C & 0 & 1 & 0  \cr
R_3^*: 2A \to A & 0.5 & 0 & 0  \cr
R_4^*: 2B \to B & 0 & 0.5 & 0  \cr
R_5: C \to 0 & 0 & 0 & 1  \cr
}$$

Below are the incidence matrices of the equivalence classes (which are equal to the corresponding fundamental classes).
$$\bordermatrix{%
& R_1 & R_3^* \cr
A & -1 & 1   \cr
B & 1 & 0  \cr
2A & 0 & -1  \cr
} \ \ \ 
\bordermatrix{%
& R_2 & R_5 \cr
B & -1 & 0   \cr
C & 1 & -1  \cr
0 & 0 & 1  \cr
} \ \ \ 
\bordermatrix{%
& R_4^* \cr
2B & -1   \cr
B & 1  \cr
}$$
Hence, the direct sum of the incidence matrices has rank 5.
On the other hand, below is the incidence matrix of the whole network.
$$\bordermatrix{%
& R_1 & R_2 & R_3^* & R_4^* & R_5 \cr
A  & -1 & 0 & 1 & 0 & 0 \cr
B  & 1 & -1 & 0 & 1 & 0 \cr
C  & 0 & 1 & 0 & 0 & -1 \cr
2A  & 0 & 0 & -1 & 0 & 0 \cr
2B  & 0 & 0 & 0 & -1 & 0 \cr
0  & 0 & 0 & 0 & 0 & 1 \cr
}$$
Thus, the incidence matrix has rank 5.
Therefore, the ${\mathscr{P}}$-decomposition is incidence-independent. Also, the ${\mathscr{F}}$-decomposition is incidence-independent.
\end{example}

\begin{remark}
Even with the restriction that the reversibility and irreversibility of the reactions under the CF-RM are retained, Example \ref{not_incidence_independent} shows that if $\mathscr{P}$-decomposition is incidence-independent, it does not follow that the $\mathscr{P}^*$-decomposition is also incidence-independent. Likewise, the incidence-independence of the $\mathscr{F}$-decomposition does not imply the incidence-independence of the $\mathscr{F}^*$-decomposition.
\end{remark}

\section{Conclusions and Outlook}
\label{sec:conclusion}
We summarize our results and provide some direction for future research.

\begin{itemize}
\item[1.] We illustrated the ${\mathscr{O}}$-, ${\mathscr{P}}$-, and ${\mathscr{F}}$-decompositions underlying the higher deficiency algorithm for mass action kinetics and the multistationarity algorithm for power-law kinetics. We also derived properties of these decompositions.
\item[2.] We employed the use of (1) the program that we created to determine the fundamental classes and whether or not the decomposition is independent and incidence-independent, (2) the DZT, and (3) the FDT, to provide a simple solution of multistationarity of the CRN of the generalization of a subnetwork of Schmitz's carbon cycle model by Hernandez et al. endowed with mass action kinetics.
\item[3.] We stated equivalent statements regarding the $\mathscr{P}$-, the $\mathscr{P}^*$-, the $\mathscr{F}$-, and the $\mathscr{F}^*$-decompositions with the assumption that the reversibility and irreversibility of the reactions under the CF-RM are retained. In this case, $|\mathscr{O}|=|\mathscr{O}^*|$.
\item[4.] Even with the restriction that the reversibility and irreversibility of the reactions under the CF-RM are preserved, Example \ref{not_incidence_independent} shows that if $\mathscr{F}$-decomposition is incidence-independent, it does not follow that the $\mathscr{F}^*$-decomposition is also incidence-independent. We provided a counterexample for this case.
\item[5.] One may look into necessary conditions to establish equivalence statements for fundamental incidence-independent decompositions. In addition, Type One $\mathscr{F}$-decompositions can also be considered for further study.
\end{itemize}

\baselineskip=0.25in

\appendix
\section{Nomenclature}
\label{nomenclature:appendix}
\subsection{List of abbreviations}
%\FloatBarrier
%\begin{table}
% table caption is above the table
%\caption{caption}
%\label{tab:ap1}       % Give a unique label
% For LaTeX tables use
%\centering
\begin{tabular}{ll}
%\hline\noalign{\smallskip}
%first & second \\
\noalign{\smallskip}\hline\noalign{\smallskip}
Abbreviation& Meaning \\
\noalign{\smallskip}\hline\noalign{\smallskip}
CF& complex factorizable \\
CKS& chemical kinetic system\\
CRN& chemical reaction network\\
HDA& higher deficiency algorithm\\
MAK& mass action kinetics\\
MSA& multistationarity algorithm\\
PLK& power-law kinetics\\
PL-NDK& power-law non-reactant-determined kinetics\\
PL-RDK& power-law reactant-determined kinetics\\
SFRF& species formation rate function\\
\noalign{\smallskip}\hline
\end{tabular}
%\end{table}
%\FloatBarrier
\subsection{List of important symbols}
% table caption is above the table
%\caption{caption}
%\label{tab:ap2}       % Give a unique label
% For LaTeX tables use
%\centering
\begin{tabular}{ll}
%\hline\noalign{\smallskip}
%first & second \\
\noalign{\smallskip}\hline\noalign{\smallskip}
Meaning& Symbol \\
\noalign{\smallskip}\hline\noalign{\smallskip}
deficiency& $\delta$  \\
dimension of the stoichiometric subspace& $s$   \\
incidence map& $I_a$\\
molecularity matrix& $Y$\\
number of complexes& $n$\\
number of linkage classes& $l$\\
number of strong linkage classes& $sl$\\
orientation& $\mathscr{O}$\\
stoichiometric matrix& $N$\\
stoichiometric subspace& $S$\\
subnetwork of $\mathscr{N}$ with respect to $\mathscr{O}$& $\mathscr{N}_\mathscr{O}$\\
\noalign{\smallskip}\hline
\end{tabular}

\section{A MATLAB program}
\label{matlabprogram}
We now provide a MATLAB program that computes for the subnetworks under the fundamental decomposition of a reaction network. It also determines whether the decomposition is independent, incidence-independent and bi-independent. We use the preliminary steps of the program of Soranzo and Altafini \cite{soranzo} to come up with our own program. We should install the free software ERNEST in our MATLAB environment. The script was named inciinde.m.
{\scriptsize
\begin{lstlisting}
function [ret] = inciinde(model)
species = {model.species.id};
n = numel(species); % number of species
reactions = {model.reaction.id};
reactant_complexes = []; % matrix of reactant complexes (species x irrev. reactions)
product_complexes = []; % matrix of product complexes (species x irrev. reactions)
rr = numel(reactions); % number of reactions (counting reversible reactions as one)
S = []; % stoichiometric matrix (species x irrev. reactions)
SnoRev = []; % stoichiometric matrix (without reverse of revesible reaction)
P_i = []; %equivalence classes
P_i_new = []; %equivalence classes
StoichMatrixForm = []; 
sr_edges = cell(0, 4);
arr=[];
for i = 1:numel(reactions)
    if isfield(model.reaction(i), 'modifier') && ~isempty(model.reaction(i).modifier)
        warning(['Reaction ' num2str(i) ' contains modifiers, which will be ignored. Specify all species in a reaction as reactants or products.'])
    end
    reactant_complexes(:, end+1) = zeros(n, 1);
    for j = 1:numel(model.reaction(i).reactant)
        reactant_complexes(find(strcmp(model.reaction(i).reactant(j).species, species), 1), end) = model.reaction(i).reactant(j).stoichiometry;
    end
    product_complexes(:, end+1) = zeros(n, 1);
    for j = 1:numel(model.reaction(i).product)
        product_complexes(find(strcmp(model.reaction(i).product(j).species, species), 1), end) = model.reaction(i).product(j).stoichiometry;
    end
    SnoRev(:, end + 1) = product_complexes(:, end) - reactant_complexes(:, end);  
end
clear label
for i = 1:numel(reactions)
    if isfield(model.reaction(i), 'modifier') && ~isempty(model.reaction(i).modifier)
        warning(['Reaction ' num2str(i) ' contains modifiers, which will be ignored. Specify all species in a reaction as reactants or products.'])
    end
    reactant_complexes(:, end+1) = zeros(n, 1);
    for j = 1:numel(model.reaction(i).reactant)
        reactant_complexes(find(strcmp(model.reaction(i).reactant(j).species, species), 1), end) = model.reaction(i).reactant(j).stoichiometry;
    end
    product_complexes(:, end+1) = zeros(n, 1);
    for j = 1:numel(model.reaction(i).product)
        product_complexes(find(strcmp(model.reaction(i).product(j).species, species), 1), end) = model.reaction(i).product(j).stoichiometry;
    end
    S(:, end + 1) = product_complexes(:, end) - reactant_complexes(:, end);
    if model.reaction(i).reversible
        reactant_complexes(:, end+1) = product_complexes(:, end);
        product_complexes(:, end+1) = reactant_complexes(:, end-1);
        S(:, end + 1) = -S(:, end);
    end
    if numel(model.reaction(i).reactant) > 0 && numel(model.reaction(i).product) > 0
        label = [num2str(model.reaction(i).reactant(1).stoichiometry) ' ' model.reaction(i).reactant(1).species];
        for j = 2:numel(model.reaction(i).reactant)
            label = [label ' + ' num2str(model.reaction(i).reactant(j).stoichiometry) ' ' model.reaction(i).reactant(j).species];
        end
        for j = 1:numel(model.reaction(i).reactant)
            sr_edges(end+1, :) = {model.reaction(i).reactant(j).species, reactions{i}, label, model.reaction(i).reactant(j).stoichiometry};
        end
        label = [num2str(model.reaction(i).product(1).stoichiometry) ' ' model.reaction(i).product(1).species];
        for j = 2:numel(model.reaction(i).product)
            label = [label ' + ' num2str(model.reaction(i).product(j).stoichiometry) ' ' model.reaction(i).product(j).species];
        end
        for j = 1:numel(model.reaction(i).product)
            sr_edges(end+1, :) = {model.reaction(i).product(j).species, reactions{i}, label, model.reaction(i).product(j).stoichiometry};
        end
    end
end
clear label
[Y, ind, ind2] = unique([reactant_complexes product_complexes]', 'rows'); % ind2(i) is the index in Y of the reactant complex in reaction i, ind(i + r) is the index in Y of the product complex in reaction i
Y = Y'; % complexes matrix (species x complexes)
m = size(Y, 2); % number of complexes
reacts_to = false(m, m); % matrix (complexes x complexes) for the reacts_to relation: reacts_to(i, j) = true iff i->j
r = size(reactant_complexes, 2); % number of irrev. reactions
reacts_in = zeros(m, r); % matrix (complexes x irrev. reactions) for the reacts_in relation: (reacts_in(i, r) = -1 && reacts_in(j, r) = 1) iff i->j
for i = 1:r
    reacts_to(ind2(i), ind2(i + r)) = true;
    reacts_in(ind2(i), i) = -1;
    reacts_in(ind2(i+r), i) = 1; %incidence
end
is_reversible = isequal(reacts_to, reacts_to'); %test for reversibility
complexes_ugraph_cc = connected_components(umultigraph(reacts_to | reacts_to')); % linkage classes
l = max(complexes_ugraph_cc); % number of linkage classes
if is_reversible
    complexes_graph_scc = complexes_ugraph_cc;
else
    complexes_graph_scc = strongly_connected_components(multigraph(reacts_to)); % strong-linkage classes
end
n_slc = max(complexes_graph_scc); % number of strong-linkage classes
is_weakly_reversible = n_slc == l; % the reaction network is weakly reversible if and only if each linkage class is a strong-linkage class
s = rank(S); % reaction network rank
d = m - l - s; % reaction network deficiency

%computing Ker L_O
Nsp = null(SnoRev,'r');
[NspRow,NspCol]=size(Nsp);
NspEqual=Nsp;
for i=1:NspRow
    if NspEqual(i,:)~=zeros(NspCol,1)
    NspEqual(i,:)=NspEqual(i,:)./gcd(sym(NspEqual(i,:)));
    end
end
for i=1:NspRow
    for j=1:NspRow
        if NspEqual(i,:)==-NspEqual(j,:);
        NspEqual(j,:)=NspEqual(i,:);
        end
    end
end
NspEqual;
NspUnique=unique(Nsp, 'rows', 'stable');
TranposeNspUnique=NspUnique.';
size(NspUnique);
NspOrtho = null(TranposeNspUnique,'r');
NspOrthoTranspose = NspOrtho.';
size(NspOrthoTranspose);
SimplifiedNspOrthoTranspose = rref(NspOrthoTranspose);
TansposeSimplifiedNspOrthoTranspose = SimplifiedNspOrthoTranspose.';
TansposeSimplifiedNspOrthoTranspose2 = TansposeSimplifiedNspOrthoTranspose.';
size(TranposeNspUnique);
size(TansposeSimplifiedNspOrthoTranspose);
TranposeNspUnique*TansposeSimplifiedNspOrthoTranspose;        
UniqueNspMatrix = unique(NspEqual, 'rows'); %take the unique NSP complexes
[UNSPMsizeRow,UNSPMsizeCol] = size(UniqueNspMatrix); 
[NspSizeRow,NspSizeCol] = size(Nsp); 
%The following command is to identify the location of reactions (equivalence classes).
[LiaRR,LocbRR] = ismember(Nsp, UniqueNspMatrix, 'rows');
IdentifyLocbRR = LocbRR;
IdentifyUniqLocbRR = unique(IdentifyLocbRR(~isnan(IdentifyLocbRR)));
histIdentifyLocbRR=histc(IdentifyLocbRR,IdentifyUniqLocbRR);
UniqueRR = IdentifyUniqLocbRR(histIdentifyLocbRR >1);
[RRsizeRow,RRsizeCol] = size(UniqueRR);
%The following command is to identify the location of reactions (equivalence classes).
[Lia,Locb] = ismember(NspEqual, UniqueNspMatrix, 'rows');
IdentifyLocb = Locb;
IdentifyUniqLocb = unique(IdentifyLocb(~isnan(IdentifyLocb)));
histIdentifyLocb=histc(IdentifyLocb,IdentifyUniqLocb);
UniqueReactionRow = IdentifyUniqLocb(histIdentifyLocb >=1);
[URsizeRow,URsizeCol] = size(UniqueReactionRow);
UniqueReactionRowMatrixTik = zeros(UNSPMsizeRow,numel(reactions));
for k=1:URsizeRow
    recordALL=find(ismember(IdentifyLocb,UniqueReactionRow(k)));
    [Res,LocRes] = ismember(Nsp,(zeros(NspSizeCol,1).'),'rows');
    not(Res);
    if not(Res) & not(isempty(Nsp))
    record=find(ismember(IdentifyLocb,UniqueReactionRow(k)));
    [URsizeRecordRow,URsizeRecordCol] = size(record);
    for i=1:URsizeRecordRow
        %disp(model.reaction(record(i)).id);
        %disp(model.reaction(record(i)).reversible);
    end
    else
    Res;
    [Res,LocRes] = ismember(Nsp,(zeros(NspSizeCol,1).'),'rows');
    if isempty(Nsp) | Res
    %fprintf('The zeroth fundamental class F0 has the following reaction(s).');
    record0=find(ismember(IdentifyLocb,UniqueReactionRow(k)));
    %fprintf('\n');
    [URsizeRecordRow0,URsizeRecordCol0] = size(record0);
    for i=1:URsizeRecordRow0
        %disp(model.reaction(record0(i)).id);
        %disp(model.reaction(record0(i)).reversible);
    end
    else 
     
    %fprintf('The fundamental class F%d has the following reaction(s).', UniqueReactionRow(k)-1);
    record=find(ismember(IdentifyLocb,UniqueReactionRow(k)));
    %fprintf('\n');
    [URsizeRecordRow,URsizeRecordCol] = size(record);
    for i=1:URsizeRecordRow
        %disp(model.reaction(record(i)).id);
        %disp(model.reaction(record(i)).reversible); 
    end   
    end
    end        
end 
SN=Locb.';
Locb=[];
subnetworks = {SN};
subnetworks2 = [SN];
subnetworks3 = unique(subnetworks2);
subn = numel(subnetworks3);
reactant_complexes = []; % matrix of reactant complexes (species x irrev. reactions)
product_complexes = []; % matrix of product complexes (species x irrev. reactions)
S = []; % stoichiometric matrix (species x irrev. reactions)
sum = 0;
sum2 = 0;
StoichMatrixForm = []; 
sr_edges = cell(0, 4);
%NOTE: If you want the reverse reaction of a reversible reaction to be included in a different subnetwork,
%use 2 model reactions and treat the two reactions as irreversible.

[Lia,Locb] = ismember(subnetworks2, subnetworks3);
IdentifyLocb = Locb;
IdentifyUniqLocb = unique(IdentifyLocb(~isnan(IdentifyLocb)));
histIdentifyLocb=histc(IdentifyLocb,IdentifyUniqLocb);
UniqueReactionRow = IdentifyUniqLocb(histIdentifyLocb >=1);
[URsizeRow,URsizeCol] = size((UniqueReactionRow).');

for k=1:URsizeRow
    record=find(ismember(IdentifyLocb,UniqueReactionRow(k)));
    [URsizeRecordRow,URsizeRecordCol] = size(record);
    fprintf('SUBNETWORK %d:', UniqueReactionRow(k))
    fprintf('\n')
    for i=1:URsizeRecordCol
        disp(model.reaction(record(i)).id);

    if isfield(model.reaction(record(i)), 'modifier') && ~isempty(model.reaction(record(i)).modifier)
        warning(['Reaction ' num2str(record(i)) ' contains modifiers, which will be ignored. Specify all species in a reaction as reactants or products.'])
    end
    reactant_complexes(:, end+1) = zeros(n, 1);
    for j = 1:numel(model.reaction(record(i)).reactant)
        reactant_complexes(find(strcmp(model.reaction(record(i)).reactant(j).species, species), 1), end) = model.reaction(record(i)).reactant(j).stoichiometry;
    end
    product_complexes(:, end+1) = zeros(n, 1);
    for j = 1:numel(model.reaction(record(i)).product)
        product_complexes(find(strcmp(model.reaction(record(i)).product(j).species, species), 1), end) = model.reaction(record(i)).product(j).stoichiometry;
    end
    S(:, end + 1) = product_complexes(:, end) - reactant_complexes(:, end);
    if model.reaction(record(i)).reversible
        reactant_complexes(:, end+1) = product_complexes(:, end);
        product_complexes(:, end+1) = reactant_complexes(:, end-1);
        S(:, end + 1) = -S(:, end);
    end
    if numel(model.reaction(record(i)).reactant) > 0 && numel(model.reaction(record(i)).product) > 0
        label = [num2str(model.reaction(record(i)).reactant(1).stoichiometry) ' ' model.reaction(record(i)).reactant(1).species];
        for j = 2:numel(model.reaction(record(i)).reactant)
            label = [label ' + ' num2str(model.reaction(record(i)).reactant(j).stoichiometry) ' ' model.reaction(record(i)).reactant(j).species];
        end
        for j = 1:numel(model.reaction(record(i)).reactant)
            sr_edges(end+1, :) = {model.reaction(record(i)).reactant(j).species, reactions{record(i)}, label, model.reaction(record(i)).reactant(j).stoichiometry};
        end
        label = [num2str(model.reaction(record(i)).product(1).stoichiometry) ' ' model.reaction(record(i)).product(1).species];
        for j = 2:numel(model.reaction(record(i)).product)
            label = [label ' + ' num2str(model.reaction(record(i)).product(j).stoichiometry) ' ' model.reaction(record(i)).product(j).species];
        end
        for j = 1:numel(model.reaction(record(i)).product)
            sr_edges(end+1, :) = {model.reaction(record(i)).product(j).species, reactions{record(i)}, label, model.reaction(record(i)).product(j).stoichiometry};
        end
end
clear label
[Y, ind, ind2] = unique([reactant_complexes product_complexes]', 'rows'); % ind2(i) is the index in Y of the reactant complex in reaction i, ind(i + r) is the index in Y of the product complex in reaction i
Y = Y'; % complexes matrix (species x complexes)
m = size(Y, 2); % number of complexes
reacts_to = false(m, m); % matrix (complexes x complexes) for the reacts_to relation: reacts_to(i, j) = true iff i->j
r = size(reactant_complexes, 2); % number of irrev. reactions
reacts_in = zeros(m, r); % matrix (complexes x irrev. reactions) for the reacts_in relation: (reacts_in(i, r) = -1 && reacts_in(j, r) = 1) iff i->j
for i = 1:r
    reacts_to(ind2(i), ind2(i + r)) = true;
    reacts_in(ind2(i), i) = -1;
    reacts_in(ind2(i + r), i) = 1;%incidence
end
is_reversible = isequal(reacts_to, reacts_to'); %test for reversibility
complexes_ugraph_cc = connected_components(umultigraph(reacts_to | reacts_to')); % linkage classes
l = max(complexes_ugraph_cc); % number of linkage classes
if is_reversible
    complexes_graph_scc = complexes_ugraph_cc;
else
    complexes_graph_scc = strongly_connected_components(multigraph(reacts_to)); % strong-linkage classes
end
n_slc = max(complexes_graph_scc); % number of strong-linkage classes
is_weakly_reversible = n_slc == l; % the reaction network is weakly reversible if and only if each linkage class is a strong-linkage class
s = rank(S); % reaction network rank
rr = numel(reactions); % number of reactions (counting reversible reactions as one)
end
%fprintf('The stoichiometric subspace of SUBNETWORK %d is:', UniqueReactionRow(k))
S;
%fprintf('The rank of SUBNETWORK %d is:', UniqueReactionRow(k))
s = rank(S);
m;
l;
n_slc;
%fprintf('The value of (n-l) for the SUBNETWORK %d is:', UniqueReactionRow(k))
diff=m-l;
%fprintf('The value of (n-l) for the SUBNETWORK %d is:', UniqueReactionRow(k))
arr=[arr; UniqueReactionRow(k) s diff];
S = [];
reactant_complexes = [];
product_complexes = [];
sum = sum + s;
sum2 = sum2 + diff;
end

fprintf('Summary of the the values of s (2ND COL) and n-l (3RD COL) of SUBNETWORK i:')
summary=[];
summary=arr;
summary
fprintf('The SUM of the RANKS of the SUBNETWORKS is:')
fprintf('\n')
sum
fprintf('The SUM of the values of of (n-l) of the SUBNETWORKS is:')
fprintf('\n')
sum2
S=[];
for i = 1:numel(reactions)
    if isfield(model.reaction(i), 'modifier') && ~isempty(model.reaction(i).modifier)
        warning(['Reaction ' num2str(i) ' contains modifiers, which will be ignored. Specify all species in a reaction as reactants or products.'])
    end
    reactant_complexes(:, end+1) = zeros(n, 1);
    for j = 1:numel(model.reaction(i).reactant)
        reactant_complexes(find(strcmp(model.reaction(i).reactant(j).species, species), 1), end) = model.reaction(i).reactant(j).stoichiometry;
    end
    product_complexes(:, end+1) = zeros(n, 1);
    for j = 1:numel(model.reaction(i).product)
        product_complexes(find(strcmp(model.reaction(i).product(j).species, species), 1), end) = model.reaction(i).product(j).stoichiometry;
    end  
    S(:, end + 1) = product_complexes(:, end) - reactant_complexes(:, end);
    if model.reaction(i).reversible
        reactant_complexes(:, end+1) = product_complexes(:, end);
        product_complexes(:, end+1) = reactant_complexes(:, end-1);
        S(:, end + 1) = -S(:, end);
    end
    if numel(model.reaction(i).reactant) > 0 && numel(model.reaction(i).product) > 0
        label = [num2str(model.reaction(i).reactant(1).stoichiometry) ' ' model.reaction(i).reactant(1).species];
        for j = 2:numel(model.reaction(i).reactant)
            label = [label ' + ' num2str(model.reaction(i).reactant(j).stoichiometry) ' ' model.reaction(i).reactant(j).species];
        end
        for j = 1:numel(model.reaction(i).reactant)
            sr_edges(end+1, :) = {model.reaction(i).reactant(j).species, reactions{i}, label, model.reaction(i).reactant(j).stoichiometry};
        end
        label = [num2str(model.reaction(i).product(1).stoichiometry) ' ' model.reaction(i).product(1).species];
        for j = 2:numel(model.reaction(i).product)
            label = [label ' + ' num2str(model.reaction(i).product(j).stoichiometry) ' ' model.reaction(i).product(j).species];
        end
        for j = 1:numel(model.reaction(i).product)
            sr_edges(end+1, :) = {model.reaction(i).product(j).species, reactions{i}, label, model.reaction(i).product(j).stoichiometry};
        end
    end
end
%fprintf('The stoichiometric subspace of the WHOLE NETWORK is:')
S;
s=rank(S);
fprintf('The rank of the WHOLE NETWORK is:')
s
fprintf('\n')

clear label
[Y, ind, ind2] = unique([reactant_complexes product_complexes]', 'rows'); % ind2(i) is the index in Y of the reactant complex in reaction i, ind(i + r) is the index in Y of the product complex in reaction i
Y = Y'; % complexes matrix (species x complexes)
m = size(Y, 2); % number of complexes
reacts_to = false(m, m); % matrix (complexes x complexes) for the reacts_to relation: reacts_to(i, j) = true iff i->j
r = size(reactant_complexes, 2); % number of irrev. reactions
reacts_in = zeros(m, r); % matrix (complexes x irrev. reactions) for the reacts_in relation: (reacts_in(i, r) = -1 && reacts_in(j, r) = 1) iff i->j
for i = 1:r
    reacts_to(ind2(i), ind2(i + r)) = true;
    reacts_in(ind2(i), i) = -1;
    reacts_in(ind2(i+r), i) = 1;%incidence
end
is_reversible = isequal(reacts_to, reacts_to'); %test for reversibility
complexes_ugraph_cc = connected_components(umultigraph(reacts_to | reacts_to')); % linkage classes
l = max(complexes_ugraph_cc); % number of linkage classes
if is_reversible
    complexes_graph_scc = complexes_ugraph_cc;
else
    complexes_graph_scc = strongly_connected_components(multigraph(reacts_to)); % strong-linkage classes
end
n_slc = max(complexes_graph_scc); % number of strong-linkage classes
is_weakly_reversible = n_slc == l; % the reaction network is weakly reversible if and only if each linkage class is a strong-linkage class
s = rank(S); % reaction network rank
fprintf('The value of (n-l) for the WHOLE NETWORK is:')
diff=m-l
fprintf('NOTE: The subnetworks given above correspond to the fundamental class under the F-decomposition.')
fprintf('\n')
if sum==s
   fprintf('CONCLUSION 1: The F-decomposition is INDEPENDENT.')
else
   fprintf('CONCLUSION 1: The F-decomposition is NOT INDEPENDENT.')
end
fprintf('\n')
if sum2==diff
   fprintf('CONCLUSION 2: The F-decomposition is INCIDENCE-INDEPENDENT.')
else
   fprintf('CONCLUSION 2: The F-decomposition is NOT INCIDENCE-INDEPENDENT.')
end
fprintf('\n')
if sum==s & sum2==diff
   fprintf('CONCLUSION 3: The F-decomposition is BI-INDEPENDENT.')
else
   fprintf('CONCLUSION 3: The F-decomposition is NOT BI-INDEPENDENT.')
end
\end{lstlisting}
}


\begin{thebibliography}{10}

\bibitem{arc_jose}
C. P. Arceo, E. Jose, A. Mar{\'i}n-Sanguino, E. Mendoza, Chemical reaction network approaches to Biochemical Systems Theory, \textit{Math. Biosci.} \textbf{269} (2015) 135--152.

%\bibitem{arceo}
%C. P. Arceo, E.  Jose, A. Lao, E. Mendoza, Reaction networks and kinetics of biochemical systems, \textit{Math. Biosci.} \textbf{283} (2017) 13--29.

%\bibitem{ellison_ADA}
%P. Ellison, The advanced deficiency algorithm and its applications to mechanism discrimination, Ph.D. thesis, Department of Chemical Engineering, University of Rochester, 1998.
\bibitem{CODH2018}
C. Conradi, A. Shiu, Dynamics of post-translational modification systems: recent results and future directions, \textit{Biophysical Journal} \textbf{114}(3) (2018) 507--515.

\bibitem{FML2018}
H. Fari{\~n}as, E. Mendoza, A. Lao, Decompositions of chemical reaction networks and embedded networks of S-systems, in preparation.

\bibitem{feinberg12}
M. Feinberg, Chemical reaction network structure and the stability of complex isothermal reactors I: The deficiency zero and deficiency one theorems, \textit{Chem. Eng. Sci.} \textbf{42} (1987) 2229--2268.

\bibitem{feinberg}
M. Feinberg, \textit{Lectures on chemical reaction networks}. Notes of lectures given at the Mathematics Research Center of the University of Wisconsin, 1979. Available at https://crnt.osu.edu/LecturesOnReactionNetworks. 

%\bibitem{feinberg_DOA}
%M. Feinberg, Multiple steady states for chemical reaction networks of deficiency one, \textit{Arch. Ration. Mech. Anal.}, \textbf{132} (1995) 371--406.

\bibitem{feinberg2}
M. Feinberg, The existence and uniqueness of steady states for a class of chemical reaction networks, \textit{Arch. Ration. Mech. Anal.} \textbf{132} (1995) 311--370.

\bibitem{fortun2}
N. Fortun, A. Lao, L. Razon, E. Mendoza, A deficiency zero theorem for a class of power-law kinetic
systems with non-reactant-determined interactions, \textit{MATCH Commun. Math. Comput. Chem.} \textbf{81}(3) (2019) 621--638.

\bibitem{GHMS2018}
E. Gross, H. Harrington, N. Meshkat, A. Shiu, Joining and decomposing reaction networks, (2018, submitted).

\bibitem{hernandez}
B. Hernandez, E. Mendoza, A. de los Reyes V, A computational approach to multistationarity of power-law kinetic
systems, \textit{J Math Chem} \textbf{58}(1) (2020) 56--87. DOI: 10.1007/s10910-019-01072-7

\bibitem{hernandez2}
B. Hernandez, E. Mendoza, A. de los Reyes V, Fundamental decompositions and multistationarity of power-law kinetic
systems, \textit{MATCH Commun. Math. Comput. Chem.} \textbf{83}(2) (2020) 403-434.

\bibitem{ji}
H. Ji, Uniqueness of equilibria for complex chemical reaction networks, Ph.D. Dissertation, Ohio State University, 2011.

%\bibitem{MURE2014}
%S. M{\"u}ller, G. Regensburger, Generalized mass action systems and positive solutions of polynomial equations with real and symbolic exponents (invited talk), in: W. M. Seiler, E. V. Vorozhtsov (Eds.), {\it Computer Algebra in Scientific Computing, CASC 2014\/}, Springer, Cham, 2014, pp. 302--323.

\bibitem{cfrm}
A. Nazareno, R. Eclarin, E. Mendoza, A. Lao, Linear conjugacy of chemical kinetic systems, \textit{Math. Biosci. Eng.} \textbf{16}(6) (2019) 8322-8355.

\bibitem{ray_macrophages}
J. Ray, D. Kirschner, Requirement for multiple activation signals by anti-inflammatory feedback in macrophages, \textit{J. Theor. Biol.} 241 (2006) 276-294.

\bibitem{schmitz}
R. Schmitz, The Earth's carbon cycle: Chemical engineering course material, \textit{Chemical Engineering Education} \textbf{36}(4) (2002) 296--309.

\bibitem{soranzo}
N. Soranzo, C. Altafini, ERNEST: a toolbox for chemical reaction network theory, \textit{Bioinformatics} 25 (2009) 2853-2854.

%\bibitem{voit}
%E. Voit, Computational analysis of biochemical systems: A practical guide for biochemists and molecular biologists, {\it Cambridge Univ. Press}, Cambridge, 2000.

\end{thebibliography}
\end{document}